\def\k{{\Bbbk}}
\def\L{{\mathcal L}}
\def\g{{\mathfrak g}}
\def\A{{\mathbb A}}
\def\Z{{\mathbb Z}}
\def\h{{\mathfrak h}}
\def\t{{\mathfrak t}}
\def\NN{{\mathcal N}}
\def\OO{{\mathcal O}}
\def\L{{\mathcal L}}
\def\z{{\mathfrak z}}
\def\m{{\mathfrak m}}
\def\n{{\mathfrak n}}
\def\Z{{\mathbb Z}}
\def\End{\mathop{\fam0 End}}
\def\Lie{\mathop{\fam0 Lie}}
\def\ker{\mathrm{Ker}\,}
\def\Der{\mathop{\fam0 Der}}
\def\ad{\mathrm{ad\,}}
\theoremstyle{plain}
\newtheorem{theorem}{Theorem}
\newtheorem{corollary}{Corollary}
\newtheorem{prop}{Proposition}
\newtheorem{lemma}{Lemma}
\theoremstyle{definition}
\theoremstyle{remark}
\newtheorem{rem}{Remark}
\def\subtitle#1. {{\medskip\bf#1\par\nobreak\smallskip}}
\def\proclaim#1. {\medbreak\bgroup\noindent\bf#1. \it}
\def\endproclaim{\egroup
\ifdim\lastskip<\medskipamount\removelastskip\medskip\fi}
\def\citedef#1 {\advance\citation by1
  \expandafter\edef\csname#1\endcsname{{\the\citation}}
  \checkendcitedef}
\def\checkendcitedef#1{\ifx#1\endcitedef\else\citedef#1\fi}
\def\cite#1{\csname#1\endcsname}
\newtoks\nextauth
\newif\iffirstauth
\def\checkendauth#1{\ifx\endauth#1
        \iffirstauth\the\nextauth
        \else{} and \the\nextauth\fi,
    \else\iffirstauth\the\nextauth\firstauthfalse
        \else, \the\nextauth\fi
        \expandafter\auth\expandafter#1\fi}
\def\auth#1 #2 {\nextauth={#1 #2}\checkendauth}
\newif\ifinbook
\newif\ifbookref
\def\nextref#1 {\bookreffalse\inbookfalse
    \bibitem[\cite{#1}]{}
    \firstauthtrue
    \ignorespaces}
\def\paper#1{{\it#1,}}
\def\book#1{\bookreftrue{\it#1,}}
\def\journal#1{#1\ifinbook,\fi}
\def\bookseries#1{#1,}
\def\Vol#1{\ifbookref Vol. #1,\else\ifinbook Vol. #1,\else{\bf#1}\fi\fi
    \space\ignorespaces}
\def\nombre#1{no. #1}
\def\publisher#1{#1,}
\def\Year#1{\ifbookref #1.\else\ifinbook #1,\else(#1)\fi\fi
    \space\ignorespaces}
\def\Pages#1{\ifinbook pp. #1.\else #1.\fi}
\begin{document}
\title{Regular derivations of truncated polynomial rings}
\author{Alexander Premet}
\thanks{\nonumber{\it Mathematics Subject Classification} (2000 {\it revision}).
Primary 17B50. Secondary 13A50.}
\address{School of Mathematics, University of Manchester, Oxford Road,
M13 9PL, UK} \email{Alexander.Premet@manchester.ac.uk}
\begin{abstract}
\noindent
Let $\k$ be an algebraically closed field $\k$ of characteristic $p>2$ and let
$\OO_n=\k[X_1,\ldots, X_n]/(X_1^p,\ldots, X_n^p)$, a local $\k$-algebra of dimension $p^n$ over $\k$. If $p=3$ we assume that $n>1$ and
impose no restrictions on $n$ for $p>3$. Let $\L$ be the Lie algebra of all derivations of $\OO_n$, a restricted simple Lie algebra of Cartan type $W_n$, and denote by $G$ be the automorphism group of $\L$.
By [\cite{P91}], the invariant ring $\k[\L]^G$ is freely generated by $n$ homogeneous polynomial functions $\psi_0,\ldots, \psi_{n-1}$ and a version of Chevalley's Restriction Theorem holds for $\L$. Moreover, the majority of classical results of Kostant on the adjoint action of a complex reductive group on its Lie algebra hold for the action of $G$ on $\L$. In particular, each fibre of the map $\psi\colon\L\to \A^n$ sending any $x\in\L$ to $(\psi_0(x),\ldots,\psi_{n-1}(x))\in\A^n$ is an irreducible complete intersection in $\L$ and contains an open $G$-orbit.
However, it is also proved in [\cite{P91}] that the zero fibre of $\psi$ is not a normal variety.
In this paper, we complete the picture
by showing that Kostant's differential criterion for regularity holds in $\L$ and we prove that a
fibre of $\psi$ is normal if and only if it consists of regular semisimple elements of $\L$.

\end{abstract}
\maketitle

\bigskip

\bigskip

\bigskip

\hfill {\it To my friend Helmut Strade with admiration\ \ }

\bigskip

\section{\bf Introduction}\label{intro}
\subsection{}
Let $\OO_n$ be the truncated polynomial ring $\k[X_1,\ldots, X_n]/(X_1^p,\ldots, X_n^p)$ over an algebraically closed field $\k$ of characteristic $p>2$ and $\L=\Der(\OO_n)$. If $(p,n)=(3,1)$ then $\L\cong \mathfrak{sl}_2(\k)$. We therefore exclude this case and assume that $(p,n)\ne (3,1)$.
Let $G$ denote the automorphism group of $\OO_n$. It is well known that $G$ is a connected algebraic $\k$-group
of dimension $n(p^n-1)$ and
$G/R_u(G)\cong \mathrm{GL}_n(\k)$.
Furthermore, under our assumptions on $(p,n)$ any automorphism of the Lie algebra $\L$ is induced  by a unique automorphism
of the local $\k$-algebra $\OO_n$ so that ${\rm Aut}(\L)\cong G$ as algebraic $\k$-groups.
We denote by $x_i$ the image of $X_i$ in $\OO_n$.
It is straightforward to see that $\L$ is a free $\OO_n$-module of rank $n$ with basis consisting of partial derivatives $\partial_i=\frac{\partial_i}{\partial x_i}$ where $1\le i\le n$
\subsection{}
Being a full derivation algebra, $\L$ carries a natural $p$th power map $x\mapsto x^p$ equivariant under the action of $G$. One knows that all Cartan subalgebras of the restricted Lie algebra $\L$ are toral and have dimension $n$. There are precisely
$n+1$ conjugacy classes of such subalgebras under the action of $G$. As a canonical representative of the $k$th conjugacy class one usually takes the torus
$$\t_k:=\k(x_1\partial_1)\oplus\cdots\oplus\k(x_k\partial_k)
\oplus(1+x_{k+1})\partial_{k+1}\oplus\cdots\oplus
\k(1+x_n)\partial_n, \qquad 0\le k\le n.$$

Since $\L$ contains an $n$-dimensional, self-centralising torus, some general results proved in [\cite{P89}] show that there exist
algebraically independent, homogeneous polynomial functions $\psi_0,\ldots,\psi_{n-1}\in\k[\L]^G$ with $\deg\psi_i=p^n-p^i$ such that
$$x^{p^n}+\textstyle{\sum}_{i=0}^{n-1}\,
\psi_i(x)x^{p^i}=0\qquad\quad\  (\forall\,x\in\L);$$ see Subsection~2.2 for detail.
The $G$-saturation of $\t_0$ is known to contain a nonempty Zariski open subset of $\L$. So the restriction map $\k[\L]\to \k[\t_0]$
induces an injection $j\colon\, \k[\L]^G \to \k[\t_0]^{N_G(\t_0)}$ of invariant rings.
By [\cite{P91}], the group $N_G(\t_0)$ is isomorphic to $\mathrm{GL}_n(\mathbb{F}_p)$ and acts on $\t_0$ faithfully. In conjunction with classical results of Dickson [\cite{Di11}] this implies that $j$ is surjective and hence
$\k[\L]^G\cong\, \k[\psi_0,\ldots,\psi_{n-1}]$ as $\k$-algebras.
\subsection{} Let $\psi\colon\,\L\to\mathbb{A}^n$ denote the map sending any $x\in\L$
to $\big(\psi_0(x),\ldots,\psi_{n-1}(x)\big)\in\mathbb{A}^n$. By [\cite{P91}],
the morphism $\psi$ is flat, surjective, and for any
$\mathbf{\eta}=(\eta_0,\ldots,\eta_{n-1})\in\mathbb{A}^n$ the fibre $P_{\mathbf{\eta}}:=\psi^{-1}(\eta)$ is an irreducible complete intersection in $\L$ whose defining ideal in $\k[\L]$ is generated by $\psi_0-\eta_0,\ldots,\psi_{n-1}-\eta_{n-1}$. Each
$P_{\mathbf{\eta}}$ contains a unique open $G$-orbit,
denoted $P_{\mathbf{\eta}}^{\,\circ}$, which consists of all elements  of $P_{\mathbf{\eta}}$
whose stabiliser in $G$ is trivial.
Furthermore,
there exists a homogeneous $G$-semiinvariant $\Delta\in\k[\L]$ of degree $p^n-1$ with the
property that $P_{\mathbf{\eta}}^{\,\circ}=\,\{x\in P_{\mathbf{\eta}}\,|\,\,\Delta(x)\ne 0\}$.

The above discussion shows that almost all results of Kostant [\cite{K63}] on the adjoint action of a complex reductive group on its Lie algebra hold for the action of $G$ on $\L$. There is one notable exception though. Since it is proved in [\cite{P91}] that $P_{\bf 0}\setminus P_{\bf 0}^{\,\circ}$ coincides with the singular locus of $P_{\bf 0}$, the special fibre $P_{\bf 0}$ of $\psi$ is {\it not} a normal variety. Similar to the classical case that fibre coincides with $\NN(\L)=\{x\in\L\,|\,\, x^{p^n}=0\}$, the nilpotent cone of the restricted Lie algebra $\L$.

One of the main goals of this paper is to complete the picture by showing that a fibre $P_{\mathbf{\eta}}$ is a normal variety if and only if it is smooth and we demonstrate that the latter happens if and only if $\psi_0(x)\ne 0$ for all
$x\in P_{\mathbf{\eta}}$
This is established in Subsection~4.2 with the help of
some results obtained by Skryabin in [\cite{Sk14}]. Our arguments in Section~4 also rely on the description of regular elements of $\L$ obtained in Section~3.
\subsection{}
An element $x$ of a finite dimensional Lie algebra $\g$ is called {\it regular} if the centraliser $\mathfrak{c}_\g(x)$ has the smallest possible dimension. The set $\g_{\rm reg}$ of all regular elements of $\g$ is Zariski open in $\g$. In the classical situation of a reductive Lie algebra $\g$ over $\mathbb{C}$, Kostant discovered a criterion for regularity of $x\in\g$
based on the behaviour of the differentials ${\rm d}f_1,\ldots {\rm d}f_\ell$ at $x$ of a system of basic invariants $f_1,\ldots, f_\ell\in\mathbb{C}[\g]^\g$; see [\cite{K63}]. Various versions of Kostant's differential criterion for regularity have been recorded in the literature and it is clear that the criterion represents a repeating pattern in the invariant theory of group schemes; see [\cite{Sk02}, \cite{PPY07}], for example.

In Section~3, we prove that $D\in\L_{\rm reg}$ if and only if the differentials ${\rm d}\psi_0,\ldots {\rm d}\psi_{n-1}$ are linearly independent at $D$ and show that this happens if and only if the kernel of $D$ in $\OO_n$ is spanned by the identity element; see Theorems~\ref{t1} and \ref{t2}(ii). This result is then used to give a comprehensive description of all regular conjugacy classes in $\L$; see Theorem~\ref{t2}(iii).
\subsection{}
Finally, in Section~4 we show that there exists an irreducible $G$-semiinvariant $\Delta_0\in\k[\L]$ such that $\Delta=(-1)^n\Delta_0^{p-1}$ and we identify the restriction of $\Delta_0$ to $\t_0$ with the classical Dickson semiinvariant for $\mathrm{GL}_n(\mathbb{F}_p)$. As a consequence,
we obtain rather explicit formulae for the basic invariants $\psi_0,\ldots,\psi_{n-1}\in\k[\L]^G$ in the spirit of [\cite{Di11}]; see Subsection~4.1.

\medskip

\noindent{\bf Acknowledgements.} I would like to thank Hao Chang whose questions initiated this research. I am also thankful to J\"org Feldvoss for his interest and encouragement. Special thanks go to Serge Skryabin for pointing out a serious error in the first version of this paper.

\section{\bf Generalities and recollections}\label{sec1}
\subsection{}\label{ss1} Let $\k$ be an algebraically closed field of characteristic $p>2$ and write $\OO_n$ for the truncated polynomial ring $\k[X_1,\ldots,X_n]/(X_1^p,\ldots,X_n^p)$ in $n$ variables. Let $x_i$ denote the image of $X_i$ in $\OO_n$ and let $\m$ stand for the unique maximal ideal of $\OO_n$ (this ideal is generated by  $x_1,\ldots, x_n$). For every $f\in\OO_n$ there exists a unique
element $f(0)\in \k$ such that $f-f(0)\in \m$ and it is easy to see that $f^p=f(0)^p$ for all $f\in\OO_n$. Given an $n$-tuple
$(f_1,\ldots,f_n)\in \m^n$ we write ${\rm Jac}(f_1,\ldots,f_n)$ for the determinant the Jacobian matrix $\big(\frac{\partial f_i}{\partial x_j}\big)_{1\le i,j\le n}$ with entries in $\OO_n$.

Let $G$ be the automorphism group of the $\k$-algebra $\OO_n$. Each $\sigma\in
G$ is uniquely determined by its effect on the generators $x_i$ of $\OO_n$ and since $x_i^p=0$ it must be that $\sigma(x_i)\in\m$ for all $i$. An assignment $\sigma(x_i)=f_i$ with $f_i\in\m$ extends to an automorphism of $\OO_n$ if and only in ${\rm Jac}(f_1,\ldots, f_n)\not\in \m$. This shows that $G$ is a connected algebraic
$\k$-group whose unipotent radical $R_u(G)$ consists of those
$\sigma\in G$ for which $\sigma(x_i)-x_i\in\m$ for all $i$. Furthermore, $G/R_u(G)\cong {\rm GL}_n(\k)$ and there is a reductive subgroup $G_0$
isomorphic to ${\rm GL}_n(\k)$ such that $G\cong G_0
\ltimes R_u(G)$ as algebraic $\k$-groups. More precisely, $G_0$ consists of all automorphism of $\OO_n$ induced by the {\it linear} substitutions of the $x_i$'s.

Let $\L={\rm Der}(\OO_n)$, the Lie algebra of all derivations of $\OO_n$. Any $D\in \L$ is uniquely determined by its effect on the generators $x_1,\ldots, x_n$. Conversely, for every $n$-tuple $(f_1,\ldots,f_n)\in(\OO_n)^n$
there exists a unique $D\in\L$ such that $D(x_i)=f_i$ for all $i$.
We denote by $\partial_i$ the derivation of $\OO_n$ with the property that $\partial_i(x_j)=\delta_{ij}$ for $1\le j\le n$. The above discussion implies that $\L$ 
is a free $\OO_n$-module with basis consisting of $\partial_1,\ldots, \partial_n$.

Recall that $p>3$ and $(p,n)\ne (3,1)$. In this situation, Jacobson proved in [\cite{Jac43}] that any automorphism of $\L$ is induced by a unique automorphism of $\OO_n$ (this is stated under the assumption that $p>3$ in {\it loc. cit.}, but
after a slight modification Jacobson's arguments go through in our present case). So from now on we shall identify $G$ with the automorphism group ${\rm Aut}(\L)$ by using the rule $$\sigma(D)=\sigma\circ D\circ \sigma^{-1}\ \qquad(\forall\,\sigma\in G,\,D\in\L).$$

There is a unique cocharacter $\lambda\colon\,
\k^\times\to G$ such that $(\lambda(t))(x_i)=tx_i$
for all $t\in\k^\times$ and
$1\le i\le n$. Since $\sigma(fD)=\sigma(f)\sigma(D)$
for all $\sigma\in G$, $f\in \OO_n$ and $D\in \L$
and since $(\lambda(t))(\partial_i)=t^{-1}\partial_i$ for all $i$, the action of $\lambda(\k^\times)$ gives $\L$ a $\Z$-grading
$$\L=\L_{-1}\oplus\L_0\oplus\cdots \oplus\L_{n(p-1)-1},\qquad [\L_i,\L_j]\subseteq \L_{i+j},$$ such that
$\L_{-1}=\k\partial_1\oplus\cdots\oplus\k\partial_n$
and $\L_0\cong\mathfrak{gl}_n(\k)$. The subalgebra
$$\L_{(0)}:=\textstyle{\bigoplus}_{i\ge 0}\,\L_i=\m\partial_1\oplus\cdots\oplus\m\partial_n$$
is often referred to as the {\it standard maximal subalgebra} of $\L$. By a result of Kreknin, it can be characterised as the unique proper subalgebra of smallest codimension in $\L$. It is immediate from the above-mentioned description of $G$ that $\Lie(G)=L_{(0)}.$

For any $k\ge \Z_{\ge -1}$ we set $\L_{(k)}=\bigoplus_{i\ge k}\,\L_i$. It is straightforward to see that $$\L_{(1)}=\textstyle{\bigoplus}_{i\ge 0}\,\L_i=\m^2\partial_1\oplus\cdots\oplus\m^2\partial_n$$ is the nilradical of $\L_{(0)}$ and $\L_{(0)}/L_{(1)}\cong\mathfrak{gl}_n(\k)$.
\subsection{}\label{ss2} Let $\g$ be a finite dimensional restricted Lie algebra over $\k$ with  $p$-mapping $\pi\colon \g\to\g,\ x\to x^{[p]}$.
By Jacobson's formula, $\pi$ is a morphism of algebraic varieties induced by a collection of homogeneous polynomial functions of degree $p$ on $\g$. Given $k\in\mathbb{N}$ we write $\pi^k$ for the $k$-th iteration of $\pi$, so that $\pi^k(x)=x^{[p]^k}$ for all $x\in \g$.
An element $x$ of $\g$ is called {\it nilpotent}
if $\pi^N(x)=0$ for $N\gg 0$. The set $\NN(\g)$ of all nilpotent elements of $\g$ is a Zariski closed, conical subset of $\g$.
Given and element $x\in\g$ we set $\langle x,[p]\rangle:=\, \sum_{i\ge 0}\k \pi^i(x)$ of $\g$. This is an abelian restricted subalgebra of $\g$.
We say that $x\in\g$ is {\it semisimple}
if $x\in \langle x^{[p]},[p]\rangle=\, \sum_{i\ge 1}\k \pi^i(x)$. It is well known (and easy to see) that for any $x\in
\g$ there exist a unique semisimple element $x_s$ and a unique nilpotent element $x_n$ in $\langle x, [p]\rangle$ such that $x=x_s+x_n$. For a restricted subalgebra $\h$ of $\g$ we denote by $\h_s$ the set of all semisimple elements of $\h$. As $\h_s=\pi^N(\h)$, where $N\gg 0$, the set $\h_s$ contains a nonempty open subset of its Zariski closure in $\h$.

Given $x\in \g$ we let $\g_x^0$ denote the set of all
$y\in \g$ for which $(\ad x)^{N}(y)=0$, where $N\gg 0$, and we set ${\rm rk}(\g):=\min_{x\in\g}\,\dim\g_x^0$. It is well known that $\g_x^0$ is a restricted  subalgebra of $\g$ containing the centraliser $\mathfrak{c}_{\g}(x)$. Furthermore,
if $\dim \g_x^0={\rm rk}(\g)$ then $\g_x^0$
is a Cartan subalgebra of minimal dimension in $\g$. In particular, $\g_x^0$ is a nilpotent Lie algebra.
We say that an element $x\in\g$ is {\it regular} if its centraliser $\mathfrak{c}_\g(x)$ has the smallest possible dimension. It follows from basic linear algebra that the set $\g_{\rm reg}$ of all regular elements of $\g$ is Zariski open in $\g$.

A Lie subalgebra $\t$ of $\g$ is called {\it toral} (or a {\it torus}) if all elements of $\t$ are semisimple, and an element $t\in \g$ is called {\it toral} if $t^{[p]}=t$. As $\k$ is algebraically closed,
any toral subalgebra $\t$ of $\g$ is abelian. Furthermore,
the set $\t^{\rm tor}$ of all toral elements of $\t$
is an ${\Bbb F}_p$-subspace of $\t$ containing a $\k$-basis of $\t$. In particular, ${\rm Card}(\t^{\rm tor})=p^l$ where $l=\dim \t$.
We denote by $MT(\g)$ the maximal dimension of toral subalgebras of $\g$ and write $\NN(\g)$ for the set of all nilpotent elements of $\g$.

Let $n=\dim \g$, $s=MT(\g)$, and let $e=e(\g)$ be the smallest nonnegative integer such that $\pi^e(V)\subseteq \g_s$ for some nonempty Zariski open subset of $\g$. By [\cite{P89}, Theorem~2], there exist nonzero homogeneous polynomial functions $\psi_0,
\ldots,\psi_{s-1}$ on $\g$ such that $\deg \psi_i=p^{s+e}-p^{i+e}$ and
\begin{equation}\label{p-pol}\pi^{s+e}(x)+\textstyle{\sum}_{i=0}^{s-1}\,\psi_i(x)\pi^{i+e}(x)=0\qquad\quad(\forall\,x\in\g).\end{equation}
Moreover, it is immediate from [\cite{P89}, Lemma~4(2)] that the $\psi_i$'s are invariant under the action of the automorphism group of the restricted Lie algebra $\g$ on the coordinate ring $\k[\g]$. By [\cite{P03}, Theorem~4.2], we also have that $\psi_i\circ \pi=\psi_i^p$ for all $i$. This implies that the nilpotent cone $\NN(\g)$ coincides with the zero locus
of the ideal of $\k[\g]$ generated by $\psi_0,\ldots, \psi_{s-1}$. Since $\NN(\g)$ intersects trivially with any $s$-dimensional torus of $\g$, all irreducible components of $\NN(\g)$ have dimension $n-s$. This implies that the polynomial functions $\psi_0,\ldots,\psi_{s-1}$ form a regular sequence in $\k[\g]$. In particular, they are algebraically independent in $\k(\g)$.

Since our base field has positive characteristic, it may happen that
$\g$ contains maximal tori of different dimensions. However, it is still true that if $\t$ is a maximal torus of $\g$ then its centraliser $\mathfrak{c}_\g(\t)$ is a Cartan subalgebra of $\g$. Conversely, any Cartan subalgebra $\h$ of $\g$ has the form $\h=\mathfrak{c}_\g(\t)$ where $\t=\h_{s}$
is the unique maximal torus of $\g$ contained in $\h$.
 A Cartan subalgebra $\h$ of $\g$ is called {\it regular} if $\dim\h_{s}=MT(\g)$. By the main results of
[\cite{P86}], a Cartan subalgebra $\h$ of $\g$ is regular if and only if the variety $\h\cap\NN(\g)$
has the smallest possible dimension. Furthermore,
the equalities $$\dim \h={\rm rk}(\g)\ \mbox{ and }\,\,   \dim \h-MT(\g)=\dim\g-\dim\overline{\pi_e(\g)}$$ hold for
any regular Cartan subalgebra of $\g$. In particular, $\g$ contains a self-centralising torus if and only if $e(\g)=0$, that is $\g_s$ contains a nonempty Zariski open subset of $\g$.  The above also shows that
any self-centralising torus $\t$ of $\g$ (if it exists) is a regular Cartan subalgebra of $\g$ and
$\dim \t=MT(\g)={\rm rk}(\g)$.

\section{\bf Characterising regular derivations of $\OO_n$}\label{sec2}
\subsection{}\label{3.1}
Given a torus $\t$ in a finite dimensional restricted Lie algebra $\g$ we denote by $(\t^{\rm tor})^*$ the set of all linear functions $\alpha\colon\,\t\to \k$ such that $\alpha(t)\in \mathbb{F}_p$ for all toral elements $t\in\t$.
Our discussion in Subsection~\ref{ss2} shows that
$(\t^{\rm tor})^*$ is an $\mathbb{F}_p$-form of the dual space $\t^*$. In particular, ${\rm Card}((\t^{\rm tor})^*)=p^l$ where $l=\dim \t$. Any finite dimensional restricted $\g$-module $V$ decomposes as $V=\bigoplus_{\lambda\in\t^*\,} V^\lambda$ where $$V^\lambda=\{v\in V\,|\,\,t.v=\lambda(t)v\, \mbox{ for all }\ t\in \t\}.$$ We say that $\lambda\in\t^*$ is a {\it weight} of $V$ with respect to $\t$ (or a $\t$-{\it weight}) if $V^\lambda\ne\{0\}$ and we write $\Lambda(V)$ for the set of all $\t$-weights of $V$. It is immediate from the definitions that $\Lambda(V)\subseteq (\t^{\rm tor})^*$.

For any $D\in\L$ the endomorphism $D^p\in\mathfrak{gl}(\OO_n)$ is a derivation of $\OO_n$. Therefore, $\L$ carries a natural restricted Lie algebra structure. Since $\z(\L)=\{0\}$, this structure is unique. In particular, it is equivariant under the action of $G$ on $\L$. We mention that $\OO_n$ is tautologically a restricted
$\L$-module and so the notation $\Lambda(\OO_n)$ makes sense for any toral subalgebra $\t$ of $\L$.
\begin{lemma}\label{l1}
Let $\t$ be an $r$-dimensional torus in $\L$ and let $\Lambda(\OO_n)$ be the set of all $\t$-weights of $\OO_n$. Then $\Lambda(\OO_n)=(\t^{\rm tor})^*$ and $\dim \OO_n^\lambda=p^{n-r}$ for all $\lambda\in\Lambda(\OO_n)$.
\end{lemma}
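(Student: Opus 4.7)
The plan is to reduce to a maximal torus by an embedding argument and then to diagonalise $\OO_n$ explicitly on the canonical representatives $\t_k$. Any toral subalgebra of a finite-dimensional restricted Lie algebra extends to a maximal one by a trivial Zorn/dimension argument, and by the results recalled in Subsection~\ref{ss2} every maximal torus of $\L$ is a Cartan subalgebra of dimension $n$ (since $\L$ contains a self-centralising torus). The statement of the lemma is $G$-equivariant, and the $n+1$ conjugacy classes of maximal tori are represented by $\t_0,\ldots,\t_n$, so after replacing $T\supseteq\t$ by a $G$-conjugate I may assume $T=\t_k$ for some $k\in\{0,\ldots,n\}$.

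Next I would exhibit a basis of $\OO_n$ consisting of simultaneous eigenvectors for $T=\t_k$. Writing $t_i=x_i\partial_i$ for $i\le k$ and $t_i=(1+x_i)\partial_i$ for $i>k$, a direct computation gives
\[
t_i(m_{\mathbf a})=a_i\, m_{\mathbf a}\quad(1\le i\le n),\qquad m_{\mathbf a}:=\prod_{j\le k}x_j^{a_j}\prod_{j>k}(1+x_j)^{a_j},
\]
where $\mathbf a=(a_1,\ldots,a_n)$ with $0\le a_j\le p-1$. These $p^n$ monomials span $\OO_n$ and so form a basis of common $T$-eigenvectors, from which it follows that the set of $T$-weights on $\OO_n$ exhausts $(T^{\rm tor})^*\cong\mathbb{F}_p^n$ and every $T$-weight space is one-dimensional.

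For the given $\t\subseteq T$ of dimension $r$, I would compare the two weight decompositions through the restriction map $\rho\colon(T^{\rm tor})^*\to(\t^{\rm tor})^*$, which is $\mathbb{F}_p$-linear and surjective as it is dual to the inclusion $\t^{\rm tor}\hookrightarrow T^{\rm tor}$. Since each $T$-weight space lies inside a single $\t$-weight space, one obtains
\[
\OO_n^\lambda=\bigoplus_{\mu\in\rho^{-1}(\lambda)}\OO_n^\mu\qquad\bigl(\lambda\in(\t^{\rm tor})^*\bigr).
\]
The kernel of $\rho$ has $\mathbb{F}_p$-dimension $n-r$, so each fibre $\rho^{-1}(\lambda)$ has cardinality $p^{n-r}$; combined with the one-dimensionality of the $T$-weight spaces this yields $\Lambda(\OO_n)=(\t^{\rm tor})^*$ and $\dim\OO_n^\lambda=p^{n-r}$ for every $\lambda$.

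I do not anticipate any real obstacle: the only substantive step is the explicit diagonalisation for $\t_k$, and this reduces, after the variable change $y_i=1+x_i$ on the factors with $i>k$, to the one-variable fact that $x\partial$ acts with eigenvalue $a$ on $x^a$ in $\k[x]/(x^p)$ and likewise $y\partial_y$ on $y^a$ in $\k[y]/(y^p-1)$.
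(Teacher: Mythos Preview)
Your argument is correct, but it is organised differently from the paper's own proof. The paper does not pass through a maximal torus: it invokes Strade's structure theorem [\cite{St1}, Theorem~7.5.1] to conjugate the $r$-dimensional torus $\t$ itself into a canonical position, namely $\t=\t_{0,s}\oplus\t_{(0)}$ with $\t_{(0)}$ sitting inside the diagonal torus $\t_{n,s}=\bigoplus_{i>s}\k x_i\partial_i$. After peeling off the $(1+x_i)\partial_i$ factors it reduces to the case $\t\subseteq\t_n$, writes a basis of $\t^{\rm tor}$ in echelon form $t_i=x_i\partial_i+\sum_{j>r}c_{i,j}x_j\partial_j$, and then counts monomials $x^{\mathbf a}$ of a prescribed $\t$-weight by solving an explicit $r\times n$ linear system over $\mathbb{F}_p$ with $n-r$ free variables.

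Your route---enlarge to a maximal torus $T$, conjugate $T$ to some $\t_k$, observe that the mixed monomials $m_{\mathbf a}$ give one-dimensional $T$-weight spaces exhausting $(T^{\rm tor})^*$, and then count fibres of the surjection $(T^{\rm tor})^*\twoheadrightarrow(\t^{\rm tor})^*$---is cleaner and avoids both the case split on $s$ and the explicit linear system. The only extra input you use is that every maximal torus of $\L$ has dimension $n$ and is $G$-conjugate to some $\t_k$; this is exactly the Demushkin-type classification of Cartan subalgebras recalled in the Introduction (combined with the fact, implicit in Subsection~\ref{ss2}, that Cartan subalgebras of $\L$ coincide with maximal tori since they are all toral). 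Both proofs rest on the same explicit diagonalisation; yours packages the final count more conceptually via the restriction map, while the paper's computation is more hands-on but self-contained with respect to the classification of tori.
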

\begin{proof} Let $\t_{(0)}=\t\cap\L_{(0)}$ and let $t_1,\ldots, t_s$ be toral elements of $\t$ whose images in $\t/\t_{(0)}$ form a basis for that vector space. We first suppose that $s\ge 1$.
Replacing $\t$ by $\sigma(\t)$ for a suitable
$\sigma\in G$ (if required)  we may assume that
$t_i=\{(1+x_i)\partial_i$ for all $1\le i\le s$
and $\t_{(0)}$ is an
$(r-s)$-dimensional subtorus of
$\t_{n,s}:=\,\bigoplus_{i=s+1}^n\k x_i\partial_i$; see [\cite{St1}, Theorem~7.5.1].
Let $\t_{0,s}:=\bigoplus_{i=1}^s\,\k(1+x_i)\partial_i$, an $s$-dimensional torus of $\L$, and let $\OO_{n,s}$ be the subalgebra of $\OO_n$ generated by $x_{s+1},\ldots, x_n$. It is straightforward to see that $\OO_{n,s}\cong\OO_{n-s}$ as $\k$-algebras and $\OO_n$ is a free
$\OO_{n,s}$-module with basis
$$\mathcal{X}:=\{(1+x_{1})^{a_1}\cdots(1+x_s)^{a_s}\,|\,\,0\le a_i\le p-1\}.$$ The set $\mathcal{X}$
consists of weight vectors for $\t_{0,s}$ corresponding to pairwise distinct weights and $t(\mathcal{X})=0$ for all  $t\in\t_{n,s}$. Since $\t=\t_{0,s}\oplus\t_{(0)}$, this shows that in proving the lemma we may assume without loss of generality that $s=0$.

Now suppose that $\t\subseteq \L_{(0)}$. In this situation [\cite{St1}, Theorem~7.5.1] essentially says that $\t$ may be assumed to be an
$r$-dimensional subtorus of
$\t_n$.
The normaliser $N$ of the set $\{x_1,\ldots, x_n\}$ in $G$ is isomorphic to the symmetric group $\mathfrak{S}_n$ and permutes the set $\{x_1\partial_1,\ldots,x_n\partial_n\}$ which forms a basis of the vector space $(\t_n)^{\rm tor}$ over $\mathbb{F}_p$. Keeping this in mind one observes that there is $\sigma\in N$ for which $\sigma(\t^{\rm tor})=\mathbb{F}_pt_1\oplus\cdots\oplus\mathbb{F}_pt_r$ where
$$t_i=x_i\partial_i+\sum_{j=r+1}^nc_{i,j}(x_j\partial_j)\ \qquad\quad
(1\le i\le r)$$
for some $c_{i,j}\in\mathbb{F}_p$.
Let $A_n$ denote the set of all $n$-tuples ${\bf a}=(a_1\ldots, a_n)$ such that $0\le a_i\le p-1$ and set $x^{\bf a}:=x_1^{a_1}\cdots x_n^{a_n}$. The set $\{x^{\bf a}\,|\,\, {\bf a}\in A_n\}$ is a basis of $\OO_n$ consisting of weight vectors for $\t_n$ and we have that
$$t_i(x^{\bf a})=\big(a_i+\textstyle{\sum}_{j=1}^nc_{i,j}a_j\big)x^{\bf a}\qquad (1\le i\le r)$$
(in order to ease notation we identify the $a_i$'s with their images in $\mathbb{F}_p=\Z/p\Z$). In the spirit of a first year linear algebra course we solve the system of linear equations
\begin{eqnarray*}
a_1+\cdots+c_{1,r+1}a_{r+1}+\cdots+c_{1,n}a_n &=&m_1\\
\ddots\qquad\qquad\qquad\qquad\qquad\qquad\vdots\ & &\ \vdots\\
a_r+c_{r,r+1}a_{r+1}+\cdots+c_{r,n}a_n&=&m_r
\end{eqnarray*}
for any $r$-tuple $(m_1,\ldots, m_r)\in (\mathbb{F}_p)^r$
by declaring $a_{r+1},\ldots, a_n$ to be our ``free variables''. We then deduce that the number of solutions with coefficients in $\mathbb{F}_p$ equals $p^{n-r}$. This completes the proof.
\end{proof}
\begin{corollary}\label{c1}
Let $\t$ be an $r$-dimensional torus in $\L$ and let $\Lambda(\L)$ be the set of all $(\ad \t)$-weights of $\L$. Then $\Lambda(\L)=(\t^{\rm tor})^*$ and $\dim \L^\lambda=np^{n-r}$ for all $\lambda\in\Lambda(\OO_n)$.
\end{corollary}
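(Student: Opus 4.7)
The plan is to deduce Corollary~\ref{c1} from Lemma~\ref{l1} by exploiting the free $\OO_n$-module decomposition $\L = \bigoplus_{j=1}^n \OO_n\partial_j$ and showing that each $\partial_j$ is itself a $\t$-weight vector, so that weights in $\L$ arise as weights in $\OO_n$ shifted by the weight of $\partial_j$.

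First I would reduce to a convenient normal form for $\t$. Conjugation by $\sigma \in G$ carries $\t$ to another torus of the same dimension and sends $\L^\lambda$ to $\L^{\lambda \circ \sigma^{-1}}$ while transforming $(\t^{\rm tor})^*$ to $(\sigma(\t)^{\rm tor})^*$, so we may assume without loss of generality that $\t$ is in the canonical form used in the proof of Lemma~\ref{l1}: $\t = \t_{0,s}\oplus\t'$ with $\t_{0,s} = \bigoplus_{i=1}^s \k(1+x_i)\partial_i$ and $\t' \subseteq \t_{n,s} = \bigoplus_{i=s+1}^n \k x_i\partial_i$ in the triangular shape described there.

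Next I would compute commutators directly: for $1 \le i \le s$ one has $[(1+x_i)\partial_i, \partial_j] = -\delta_{ij}\partial_j$, and for $s+1 \le k \le n$ one has $[x_k\partial_k, \partial_j] = -\delta_{jk}\partial_j$. Thus each $\partial_j$ is a common $\t$-eigenvector; write its weight as $\gamma_j \in \t^*$. Since a toral $t \in \t^{\rm tor}$ acts on the restricted $\L$-module $\L$ with eigenvalues in $\mathbb{F}_p$, we have $\gamma_j \in (\t^{\rm tor})^*$.

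Using the Leibniz identity $[t, f\partial_j] = t(f)\partial_j + f[t,\partial_j]$ valid for $t \in \t$, $f \in \OO_n$, it follows that $f \in \OO_n^\mu$ implies $f\partial_j \in \L^{\mu + \gamma_j}$. Combining this with the $\OO_n$-module decomposition of $\L$ yields
\[
\L^\lambda \;=\; \bigoplus_{j=1}^n \OO_n^{\lambda - \gamma_j}\,\partial_j
\qquad (\lambda \in \t^*).
\]
Since $(\t^{\rm tor})^*$ is an $\mathbb{F}_p$-subspace of $\t^*$ containing each $\gamma_j$, one has $\lambda \in (\t^{\rm tor})^*$ if and only if every $\lambda - \gamma_j$ lies in $(\t^{\rm tor})^*$. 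Lemma~\ref{l1} then gives $\dim \OO_n^{\lambda-\gamma_j} = p^{n-r}$ for $\lambda \in (\t^{\rm tor})^*$ and $\OO_n^{\lambda-\gamma_j} = 0$ otherwise, yielding $\Lambda(\L) = (\t^{\rm tor})^*$ and $\dim \L^\lambda = np^{n-r}$ for $\lambda \in \Lambda(\L)$.

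The only step requiring any work is the commutator verification that $\partial_j$ is a weight vector, and this is immediate once $\t$ is brought into the canonical form of Lemma~\ref{l1}; the rest is a formal shift of weights by the $\gamma_j$.
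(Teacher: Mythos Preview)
Your proof is correct and follows essentially the same route as the paper: reduce (via Strade's structure theorem, as in the proof of Lemma~\ref{l1}) to a torus for which each $\partial_j$ is a $\t$-weight vector, then use the identity $[t,f\partial_j]=t(f)\partial_j+f[t,\partial_j]$ to write $\L^\lambda=\bigoplus_{j=1}^n \OO_n^{\lambda-\gamma_j}\partial_j$ and apply Lemma~\ref{l1}. The only cosmetic difference is that the paper invokes \cite{St1} directly to assert that the $\partial_j$ are weight vectors, whereas you pass through the explicit normal form and verify the commutators by hand; the substance is identical.
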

\begin{proof}
Thanks to [\cite{St1}, Theorem~7.5.1] it can be assumed without loss that $\partial_1,\ldots, \partial_n$ are weight vectors for $\t$. Let  $\mu_i\in \Lambda(\L)$ be the weight of $\partial_i$, where $1\le i\le n$, and take any $\lambda\in(\t^{\rm tor})^*$. Since $$[D,fD']=D(f)D'+f[D,D']\qquad(\forall\,D,D'\in \L,\,f\in \OO_n).$$
we have that
$\L^\lambda=\textstyle{\sum}_{i=1}^n\,\OO_n^{\lambda-\mu_i}\partial_i.$
Since $\lambda-\mu_i\in (\t^{\rm tor})^*$, it follows from Lemma~\ref{l1} that $\dim\OO^{\lambda-\mu_i}=p^{n-r}$ for all $i$.
As a result, $\dim\L^\lambda=np^{n-r}$ for all $\lambda\in (\t^{\rm tor})^*$ as stated.
\end{proof}
\subsection{}\label{3.2} According to [\cite{P91}, Theorem~1], the characteristic
polynomial of any derivation $x\in\L$ has the form
$$\det(tI_{\OO_n}-x)=t^{p^n}+\textstyle{\sum}_{i=0}^{n-1}\psi_{i}(x)t^{p^i},$$ and the homogeneous polynomial functions
$\psi_0,\ldots,\psi_{n-1}$ generate freely the invariant algebra $\k[\L]^G$. By the Cayley--Hamilton theorem, we have that
\begin{equation}\label{ppol}
x^{[p^n]}+
\textstyle{\sum}_{i=0}^{n-1}\,\psi_{i}(x)x^{[p^i]}=0\qquad\quad(\forall\,x\in\L).
\end{equation}
Due to Lemma~\ref{l1} and Corollary~\ref{c1} this shows that
$x\in \L$ is regular semisimple if and only if $\psi_0(x)\ne 0$. From this it follows that $e(\L)=0$.
Our discussion in Subsection~\ref{ss2} now yields that
$\psi_i(x^{[p^k]})=\psi_i(x)^{p^k}$ for all $k\in\Z_{\ge 0}$. As $x^{[p^k]}=x_s^{[p^k]}$ for all $k\gg 0$, this entails that $\psi_i(x)=\psi_i(x_s)$
for all $x\in\L$.

Given an arbitrary element $D\in \L$ we denote by $\t_D$ the torus of $\L$ generated by the semisimple part $D_s$ of $D$.
\begin{lemma}\label{l2}
Let $D\in\L\setminus\mathcal{N}(\L)$ and define $r=r(D):=\min\{0\le i\le n-1\,|\,\,\psi_i(D)\ne 0\}$.
Then the following hold:
\begin{itemize}
\item[(i)\,] $\dim \t_D=n-r$ and $D_n^{p^r}=0$.

\smallskip

\item[(ii)\,] The linear map $Q(D):=(\ad D)^{p^n-p^r}+\sum_{i=r}^{n-1}\psi_i(D)(\ad D)^{p^i-p^r}\in\End(\L)$
acts invertibly on $\L_D^0$ and annihilates each $\t_D$-weight space $\L^\lambda$ with $\lambda\ne 0$.
\end{itemize}
\end{lemma}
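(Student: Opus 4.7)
My plan is to derive both parts from the Jordan decomposition $D = D_s + D_n$, the $p$-polynomial identity (\ref{ppol}), and the identity $\psi_i(D) = \psi_i(D_s)$ recorded in Subsection~\ref{3.2}; the weight decompositions of $\OO_n$ and $\L$ under $\t_D$ from Lemma~\ref{l1} and Corollary~\ref{c1} will furnish the necessary bookkeeping. The main technical obstacle I foresee is the $\mathbb{F}_p$-linear evaluation map $(\t_D^{\rm tor})^* \to \k$, $\lambda \mapsto \lambda(D_s)$: establishing its injectivity is what later permits me to identify the generalised zero eigenspace $\L_D^0$ with the weight-zero space $\L^0$ and collapse $Q(D)|_{\L_D^0}$ to a scalar.

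For part~(i), I would compute $\det(tI_{\OO_n} - D)$ in two different forms. Since $D_n$ is nilpotent and commutes with $D_s$, this determinant equals $\det(tI_{\OO_n} - D_s)$, which by Lemma~\ref{l1} equals $\prod_{\lambda \in (\t_D^{\rm tor})^*}(t - \lambda(D_s))^{p^{n-s}}$ with $s := \dim \t_D$. Because $D_s$ generates $\t_D$ as a restricted subalgebra, its abstract minimal $p$-polynomial has $p$-degree exactly $s$; on the faithful restricted module $\OO_n$ this coincides with the operator minimal polynomial, which, $D_s$ being semisimple, equals $\prod_{\alpha \in V}(t - \alpha)$ where $V \subseteq \k$ is the image (an $\mathbb{F}_p$-subspace) of the evaluation map. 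Matching $p$-degrees forces $\dim_{\mathbb{F}_p} V = s$ and yields the desired injectivity. Writing $\prod_{\alpha \in V}(t - \alpha) = t^{p^s} + b_{s-1}t^{p^{s-1}} + \cdots + b_0 t$ with $b_0 = (-1)^{p^s - 1}\prod_{\alpha \ne 0}\alpha \ne 0$ and raising to the $p^{n-s}$th power via Frobenius, a comparison with the formula of [\cite{P91}] forces $\psi_i(D) = 0$ for $i < n-s$ and $\psi_{n-s}(D) = b_0^{p^{n-s}} \ne 0$, so $r = n - s$. For the second statement of (i), I would apply (\ref{ppol}) to $D$, invoke $[D_s, D_n] = 0$ with Jacobson's formula to split $D^{[p^k]} = D_s^{[p^k]} + D_n^{[p^k]}$, and subtract the identity for $D_s$ to obtain $D_n^{[p^n]} + \sum_{i=r}^{n-1}\psi_i(D) D_n^{[p^i]} = 0$. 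If $e$ were the smallest index with $D_n^{[p^e]} = 0$ and $e > r$, then applying $[p^{e-r-1}]$ to $\psi_r(D) D_n^{[p^r]} = -\sum_{i>r}\psi_i(D) D_n^{[p^i]}$ would make the right-hand side vanish (each exponent there becoming $\ge e$) and leave $\psi_r(D)^{p^{e-r-1}} D_n^{[p^{e-1}]} = 0$, a contradiction since $\psi_r(D) \ne 0$.

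For part~(ii), the identity (\ref{ppol}) composed with $\ad$ via $(\ad x)^{[p]} = \ad x^{[p]}$ factors as $Q(D) \cdot (\ad D)^{p^r} = 0$ in $\End(\L)$. On a weight space $\L^\lambda$ with $\lambda \ne 0$, $\ad D$ decomposes as the non-zero scalar $\lambda(D_s)\cdot\mathrm{id}$ plus the commuting nilpotent $\ad D_n|_{\L^\lambda}$, so by Frobenius $(\ad D)^{p^r}|_{\L^\lambda} = \lambda(D_s)^{p^r}\cdot\mathrm{id} + (\ad D_n)^{p^r}|_{\L^\lambda}$, a non-zero scalar plus a nilpotent and hence invertible; together with $Q(D)(\ad D)^{p^r} = 0$ this forces $Q(D)|_{\L^\lambda} = 0$. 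Finally, the injectivity of $\lambda \mapsto \lambda(D_s)$ proved above identifies the zero eigenspace of the semisimple part $\ad D_s$ with $\L^0$, so $\L_D^0 = \L^0$. On $\L^0$ we have $\ad D = \ad D_n$, and part~(i) yields $(\ad D_n)^{p^r} = \ad(D_n^{[p^r]}) = 0$. Since all the exponents $p^i - p^r$ for $i > r$ and $p^n - p^r$ are at least $p^r$ (here one uses $p \ge 3$), every summand of $Q(D)|_{\L^0}$ vanishes except the $i = r$ term, leaving $Q(D)|_{\L^0} = \psi_r(D)\cdot\mathrm{id}$, which is invertible by the definition of $r$.
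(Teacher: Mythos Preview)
Your proof is correct. The overall strategy matches the paper's: both rely on the $p$-polynomial identity~(\ref{ppol}), the Jordan decomposition $D=D_s+D_n$, the fact $\psi_i(D)=\psi_i(D_s)$, and the weight decomposition under $\t_D$. Part~(ii) is argued essentially the same way in both.

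The tactical choices in part~(i) differ, and the paper's are shorter. For $\dim\t_D=n-r$ you compute $\det(tI-D_s)$ from the weight decomposition, match it against the formula of [\cite{P91}], and extract $r=n-s$ via the injectivity of the evaluation map $\lambda\mapsto\lambda(D_s)$. The paper instead reads off directly from $D_s^{p^n}+\sum_{i\ge r}\psi_i(D)D_s^{p^i}=0$ that $\dim\ker D_s=p^r$ (since $\psi_r(D)\ne 0$ kills the factor $t^{p^r}$ exactly), and then invokes Lemma~\ref{l1}. For $D_n^{p^r}=0$ your minimality argument with iterated $p$-th powers is fine, but the paper observes in one line that (\ref{ppol}) expresses $D^{p^r}$ in terms of $D^{p^i}$ with $i>r$, so $D^{p^r}\in\langle D^{[p]},[p]\rangle$ is semisimple, whence its nilpotent part $D_n^{p^r}$ vanishes. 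Your explicit proof of the injectivity of the evaluation map is a nice bonus: the paper uses this fact implicitly (``$\ad D_s$ acts invertibly on $\L^\lambda$ with $\lambda\ne 0$ since $\t_D$ is generated by $D_s$'') without spelling it out. Also, your parenthetical ``here one uses $p\ge 3$'' is unnecessary; $p^{i}-p^{r}\ge p^{r}$ for $i>r$ holds for all $p\ge 2$.
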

\begin{proof}
By (\ref{ppol}), we have that $D^{p^n}+
\sum_{i=r}^{n-1}\,\psi_{i}(D)D^{p^i}=0$. Since $\psi_r(D)\ne 0$ this shows that $D^{p^r}$ lies in the
restricted subalgebra of $\L$ generated by $D^{p^i}
$ with $i>r$. Therefore, $D^{p^r}$ is a semisimple element of $\L$. As $D^{p^r}=D_s^{p^r}+D_n^{p^r}$ and $D_n^{p^r}\in \L$ is nilpotent, it must be that $D_n^{p^r}=0$.
As $\psi_i(D)=\psi_i(D_s)$ by our earlier remarks, we have that $$D_s^{p^n}+
\textstyle{\sum}_{i=r}^{n-1}\,\psi_{i}(D)D_s^{p^i}=0.$$
As $\psi_r(D)\ne 0$, this shows that the kernel of $D_s$ has dimension $p^r$. But $\ker D_s$ is nothing but the
zero weight space of $\t_D$. So  Lemma~\ref{l1} yields $\dim \t_D=n-r$ and statement~(i) follows.

Since $\t_D$ is generated by $D_s$ as a restricted Lie algebra, $\ad D_s$ must act invertibly on any weight space $\L^\lambda$ with
$\lambda\ne 0$. Since $(\ad D)^{p^r}=(\ad D_s)^{p^r}$ by part~(i), it is immediate from (\ref{ppol}) that $Q(D)$ must annihilate all such $\L^\lambda$. Finally, $Q(D)$ acts invertibly on $\L_D^0$ because
$\psi_r(D)\ne 0$ and the restriction of $\ad D$ to $\L_D^0$ coincides with that of $\ad D_n$.
\end{proof}
\subsection{}\label{3.3}
We are now ready to prove a differential criterion for regularity of elements in $\L$. It is a precise analogue of Kostant's classical result [\cite{K63}, Theorem~0.1]. Recall that the morphism $\psi\colon\,\L\to\mathbb{A}^n$ sends any
$x\in \L$ to $\big(\psi_0(x),\ldots,\psi_{n-1}(x)\big)\in\mathbb{A}^n$ and we denote by $P_{\mathbf {\eta}}$ the inverse image of ${\mathbf{\eta}}\in\mathbb{A}^n$ under $\psi$.
\begin{theorem}\label{t1} Let $D\in \L$ and
suppose that the torus $\t_D$ generated by $D_s$ has dimension $n-r$ where $0\le r\le n$. Then
the following are equivalent:
\begin{itemize}
\item[(i)\,] $D$ is a regular element of $\L$.

\smallskip

\item[(ii)\,] The differentials
${\rm d}\psi_0,\ldots,{\rm d}\psi_{n-1}$ are linearly independent at $D$.

\smallskip

\item[(iii)\,] $D$ is a smooth point of the fibre $P_{\psi(D)}$.
\end{itemize}
\end{theorem}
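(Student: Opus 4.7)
The plan is to prove the cycle (ii)$\Leftrightarrow$(iii), (i)$\Rightarrow$(iii), (iii)$\Rightarrow$(i). The first equivalence is the Jacobian criterion for smoothness: since $\psi$ is flat with irreducible fibres of constant dimension $np^{n}-n$ (recalled in the Introduction), the point $D\in P_{\psi(D)}$ is smooth iff $\dim T_{D}P_{\psi(D)}=np^{n}-n$, and this happens exactly when the $n$ functionals ${\rm d}\psi_{0}|_{D},\ldots,{\rm d}\psi_{n-1}|_{D}$ are linearly independent in $\L^{*}$, giving (ii)$\Leftrightarrow$(iii).

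For (i)$\Rightarrow$(iii) the key input is the inclusion $(\ad D)(\L)\subseteq T_{D}P_{\psi(D)}$. Although mere $G$-invariance of $\psi_{i}$ would only yield vanishing of ${\rm d}\psi_{i}|_{D}$ on $[\L_{(0)},D]$, the identity $\det(tI_{\OO_{n}}-x)=t^{p^{n}}+\sum\psi_{i}(x)t^{p^{i}}$ exhibits $\psi_{i}$ as the restriction to $\L\subseteq\End(\OO_{n})$ of a $GL(\OO_{n})$-invariant polynomial function on $\End(\OO_{n})$.  Infinitesimal $GL(\OO_{n})$-invariance gives ${\rm d}\psi_{i}|_{D}([z,D]_{\End(\OO_{n})})=0$ for every $z\in\End(\OO_{n})$, and for $z\in\L$ the associative commutator coincides with the Lie bracket of $\L$, yielding the inclusion. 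Combined with $\mathrm{rk}(\L)=n$ (since $\L$ contains the self-centralising torus $\t_{0}$, cf.\ Subsection~\ref{ss2}), so that $\dim\mathfrak{c}_{\L}(D)\ge n$ with equality characterising regularity, we get: if $D$ is regular then $\dim(\ad D)(\L)=np^{n}-n=\dim P_{\psi(D)}$, and the inclusion together with the general lower bound $\dim T_{D}P_{\psi(D)}\ge np^{n}-n$ forces $\dim T_{D}P_{\psi(D)}=np^{n}-n$, establishing (iii).

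The main obstacle is the converse (iii)$\Rightarrow$(i); here I would derive an explicit gradient formula and bring Lemma~\ref{l2} to bear.  Logarithmic differentiation of $\det(tI_{\OO_{n}}-x)$ at $x=D$ together with $\mathrm{adj}(tI_{\OO_{n}}-D)\cdot(tI_{\OO_{n}}-D)=\det(tI_{\OO_{n}}-D)\cdot I$ yields ${\rm d}\psi_{i}|_{D}(y)=-{\rm tr}_{\OO_{n}}\!\bigl(M_{p^{i}}(D)\,y\bigr)$ for $y\in\L$, where $M_{p^{i}}(D)=D^{p^{n}-1-p^{i}}+\sum_{j=i+1}^{n-1}\psi_{j}(D)\,D^{p^{j}-1-p^{i}}$ is extracted from the adjugate of $tI_{\OO_{n}}-D$.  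Assume first that $D\notin\NN(\L)$ and set $r=r(D)$; Lemma~\ref{l2} and Corollary~\ref{c1} provide the $\t_{D}$-weight decomposition $\L=\L_{D}^{0}\oplus\bigoplus_{\lambda\ne 0}\L^{\lambda}$ with $\dim\L_{D}^{0}=np^{r}$, on which $\ad D$ is invertible on each $\L^{\lambda}$ with $\lambda\ne 0$ and coincides with the nilpotent $\ad D_{n}$ on $\L_{D}^{0}$; the identity $(\ad D)^{p^{r}}Q(D)=0$ implicit in the proof of Lemma~\ref{l2}, together with $Q(D)|_{\L_{D}^{0}}=\psi_{r}(D)\cdot\mathrm{id}$, shows that $(\ad D)(\L)=(\ad D_{n})(\L_{D}^{0})\oplus\bigoplus_{\lambda\ne 0}\L^{\lambda}$, so the rank of the Jacobian is entirely controlled by the restrictions ${\rm tr}_{\OO_{n}}(M_{p^{i}}(D)\,\cdot\,)|_{\L_{D}^{0}}$.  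A careful trace-pairing analysis using the Jordan structure of $\ad D_{n}$ on $\L_{D}^{0}$ and the relation $(\ad D)^{p^{r}}Q(D)=0$ then shows that these $n$ functionals are linearly independent iff $\dim\ker(\ad D_{n}|_{\L_{D}^{0}})=n$, which is exactly the condition $\dim\mathfrak{c}_{\L}(D)=n$, i.e.\ $D$ is regular. The nilpotent case $D\in\NN(\L)$ (where $r=n$, $\L_{D}^{0}=\L$ and $M_{p^{i}}(D)=D^{p^{n}-1-p^{i}}$) is handled by the same method. I expect the hardest technical step to lie in this final Jordan-theoretic bookkeeping, which has no analogue in Kostant's classical proof since $\L$ carries no non-degenerate invariant form.
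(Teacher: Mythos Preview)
Your argument for (i)$\Rightarrow$(iii) contains a genuine gap. You correctly show $[D,\L]\subseteq T_{D}P_{\psi(D)}$ via $GL(\OO_n)$-invariance of the characteristic-polynomial coefficients, and that $\dim[D,\L]=np^n-n$ when $D$ is regular. But both this inclusion and your ``general lower bound'' yield the \emph{same} inequality $\dim T_{D}P_{\psi(D)}\ge np^n-n$; neither produces the upper bound needed for smoothness. To conclude equality you would have to know that the functionals $({\rm d}\psi_i)_D$ already span the full dual of the $n$-dimensional space $\L/[D,\L]$, which is exactly statement~(ii). As written, the step ``forces $\dim T_{D}P_{\psi(D)}=np^n-n$'' is circular.

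The paper avoids this by differentiating the Cayley--Hamilton identity~(\ref{ppol}) via Jacobson's formula for $(D+ty)^{[p^k]}$, obtaining
\[
\big((\ad D)^{p^r-1}\circ Q(D)\big)(y)\,=\,-\textstyle\sum_{i=0}^{n-1}({\rm d}\psi_i)_D(y)\cdot D^{p^i}\qquad(\forall\,y\in\L).
\]
Since $Q(D)$ acts as $\psi_r(D)\cdot{\rm id}$ on $\L_D^0$ and kills the nonzero $\t_D$-weight spaces (Lemma~\ref{l2}), this identifies the Jacobian map with $(\ad D_n)^{p^r-1}$ on $\L_D^0$, taking values in ${\rm span}\{D,D^p,\ldots,D^{p^{n-1}}\}$. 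Regularity of $D$ means every Jordan block of $\ad D_n$ on $\L_D^0$ has the maximal size $p^r$, which is precisely what makes $(\ad D_n)^{p^r-1}$ surject onto an $n$-dimensional space and hence forces the $D^{p^i}$ to be independent, giving~(ii). The same identity runs the converse: if (ii) holds one finds $y\in\L_D^0$ with $(\ad D_n)^{p^r-1}(y)=D$, whence $D,\ldots,D^{p^{n-1}}$ are independent and all Jordan blocks have size $p^r$. Your trace formula ${\rm d}\psi_i|_D(y)=-{\rm tr}\,(M_{p^i}(D)\,y)$ is correct and could in principle replace this, but the ``Jordan-theoretic bookkeeping'' you defer is then the entire content of both directions, and you have not indicated how to carry it out; the nilpotent case $r=n$ is also not covered by your sketch --- the paper handles it by invoking [\cite{P91}, Theorem~2].
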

\begin{proof}
Thanks to Jacobson's formula for $p$th powers, replacing $x$ by $D+ty$ in equation~(\ref{ppol}) and computing the coefficient of $t$ we obtain that
\begin{equation}\label{dpsi}
(\ad D)^{p^n-1}(y)+\textstyle{\sum}_{i=r}^{n-1}\,\psi_i(D)(\ad D)^{p^i-1}(y)=-\textstyle{\sum}_{i=0}^{n-1}\,({\rm d}\psi_i)_D(y)\cdot D^{p^i}\quad\ (\forall\,y\in\L).
\end{equation}
Using the notation introduced in Subsection~\ref{3.2}
we can rewrite (\ref{dpsi}) as follows:
\begin{equation}\label{dpsi1}
\big((\ad D)^{p^r-1}\circ Q(D)\big)(y)=-\textstyle{\sum}_{i=0}^{n-1}\,({\rm d}\psi_i)_D(y)\cdot D^{p^i}\qquad(\forall\,y\in\L).
\end{equation}
In conjunction with Lemma~\ref{l2}(ii) this shows that
$(\ad D)^{p^r-1}$ maps $\L_D^0$ into the linear span of $D,\ldots, D^{p^{n-1}}$. We stress that if $y\in\L^\lambda$ and $\lambda\ne 0$ then both sides of (\ref{dpsi}) must vanish as the LHS lies in $\L^\lambda$ whilst the RHS lies in
$\L^0$.

Suppose $D\in \L_{\rm reg}$. Since $\dim\L_D^0=np^r$ by Corollary~\ref{c1}, $D_n^{p^r}=0$ by Lemma~\ref{l2}(i), and $\mathfrak{c}_{\L}(D)\subseteq \L_D^0$ has dimension $n$, all Jordan blocks of the restriction $\ad D$ to $\L_D^0$ must have size $p^r$. In view of (\ref{dpsi1}), this implies that $D,\ldots, D^{p^{n-1}}$ are
linearly independent in $\L$. Thanks to (\ref{dpsi}) this yields that all linear functions
$({\rm d}\psi_i)_D$ vanish on the subspace $[D,\L]$ and the map
$\delta\colon\,\L/[D,\L]\to {\rm span}\{D,\ldots, D^{p^{n-1}}\}$ given by
$$\delta\big(y+[D,\L]\big)=-\textstyle{\sum}_{i=0}^{n-1}\,({\rm d}\psi_i)_D(y)\cdot D^{p^i}\qquad\quad (\forall\,y\in \L)$$ is a linear isomorphism.
From this it is immediate that the differentials
${\rm d}\psi_0,\ldots, {\rm d}\psi_{n-1}$ are linearly independent at $D$.

Now suppose that (ii) holds for $D$. If $r=n$ then $D$ is nilpotent. In this case [\cite{P91}, Theorem~2] shows that $D\in\L_{\rm reg}$. So let us assume from now that $r\le n-1$. Then, of course, $D\ne 0$. On the other hand, our present assumption on $D$ implies that the linear map
$X\mapsto\big(({\rm d}\psi_0)(X),\ldots, ({\rm d}\psi_{n-1})(X)\big)$
from $\L$ to $\k^n$  is surjective. Combining (\ref{dpsi1}) with Lemma~\ref{l2}(ii) yields that there exists $y\in\L_D^0$ such that $$D=(\ad D)^{p^r-1}(y)=(\ad D_n)^{p^r-1}(y).$$  As a consequence,  $S_1:=\{D_n^{p^i}\,|\,\,0\le i\le r-1\}$ is a linearly independent set. On the other hand,  $S_2:=\{D^{p^i}\,|\,\,r\le i\le n-1\}$ consists of semisimple elements of $\L$ and is linearly independent because $\dim\t_D=n-r$. As $S_1\cup S_2\subseteq{\rm span}\{D^{p^i}\,|\,\,0\le i\le n-1\}$ and
${\rm span}(S_1)\cap{\rm span}(S_2)=\{0\}$, we now deduce that the set $\{D^{p^i}\,|\,\,0\le i\le n-1\}$
is linearly independent. In view of (\ref{dpsi1}) this enables us to conclude that all Jordan blocks of the restriction of $\ad D$ to $\L_D^0$ have size $p^r$. Since $\ker \ad  D\subseteq \L_D^0$, applying Corollary~\ref{c1} yields $D\in\L_{\rm reg}$.

We have proved that (i) and (ii) are equivalent. According to [\cite{P91}, Lemma~13], the fibre $P_{\psi(D)}$ is irreducible and its defining ideal in $\k[\L]$ is generated by the polynomial functions $\psi_0-\psi_0(D),\ldots, \psi_{n-1}-\psi_{n-1}(D)$.
From this it is immediate that (ii) is equivalent to (iii).
\end{proof}
\begin{rem}\label{r1}
It follows from the proof of Theorem~\ref{t1} that
if $D\in\L_{\rm reg}$ and $r=r(D)$, then $(\ad D)^{p^r-1}$ maps $\L_D^0$ onto $\mathfrak{c}_{\L}(D)=\langle D,[p]\rangle=\sum_{i=0}^{n-1}\k D^{p^i}$ and the derivations $D,D^p,\ldots, D^{p^{n-1}}$ are linearly independent.
\end{rem}
\subsection{}\label{3.4} Our next goal is to give a more explicit characterisation of the elements in $\L_{\rm reg}$.
\begin{theorem}\label{t2}
Suppose $D\in\L$ and let $r=r(D)$. Then the following are equivalent:
\begin{itemize}
\item[(i)\,] $D$ is a regular element of $\L$.

\smallskip

\item[(ii)\,] $\ker D=\k 1$ is $1$-dimensional.

\item[(iii)\,] There exist $z_{r+1},\ldots, z_n\in\{\epsilon_i+x_i\,|\,\,r+1\le i\le n\}$ for some $\epsilon_i\in\{0,1\}$ and $\sigma\in G$ such that $\sigma(D_s)=
\sum_{i=1}^{n-r}\lambda_i (z_i\partial_i)$ for some $\lambda_i\in\k$, the torus  $\sigma(\t_D)$ is spanned by $z_{r+1}\partial_{r+1},\ldots, z_n\partial_n$, and $\sigma(D_n)=\partial_1+
x_1^{p-1}\partial_2+\cdots+x_1^{p-1}\cdots x_{r-1}^{p-1}\partial_r$.

\smallskip

\item[(iv)\,] All Jordan blocks of $D_n$ have size $p^r$.
\end{itemize}
\end{theorem}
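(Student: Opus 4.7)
The plan is to prove the cycle (iii) $\Rightarrow$ (iv) $\Rightarrow$ (ii) $\Rightarrow$ (iii), together with (iii) $\Rightarrow$ (i), thereby obtaining all four equivalences (the direction (i) $\Rightarrow$ (iv) is then covered by the Jordan-block characterisation of regularity already established in the proof of Theorem~\ref{t1}). My preliminary observation, used throughout, is that $\ker D = \ker D_n|_{\OO_n^0}$: decomposing $f = \sum_\lambda f_\lambda$ into $\t_D$-weight components, the equation $Df = 0$ splits as $(D_n + \lambda(D_s)I)f_\lambda = 0$ on each $\OO_n^\lambda$; nilpotency of $D_n$ forces $\lambda(D_s) = 0$, and after normalising $D_s = \sum_{i>r}\lambda_i z_i\partial_i$ with $\mathbb{F}_p$-linearly independent $\lambda_i$'s (the independence being forced by the requirement that $D_s$ generate $\t_D$), this holds only for $\lambda = 0$. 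Consequently, (ii) is equivalent to $D_n$ acting on $\OO_n^0$ with a single Jordan block of size $p^r$.

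For (iii) $\Rightarrow$ (iv), (ii), (i): in the normal form, $\sigma(D_n)$ involves only $\partial_k$ with $k \leq r$, has coefficients in the subalgebra generated by $x_1,\ldots,x_r$ (which coincides with $\OO_n^0$ and is isomorphic to $\OO_r$), and annihilates each $z_j$ for $j > r$. Hence multiplication by the invertible weight vector $w_\lambda = \prod_{j>r}z_j^{a_j}$ gives an $\OO_r$-linear isomorphism $\OO_r \to \OO_n^\lambda$ intertwining $\sigma(D_n)|_{\OO_r}$ with $\sigma(D_n)|_{\OO_n^\lambda}$. On $\OO_r$, a direct computation with cyclic vector $x_1^{p-1}\cdots x_r^{p-1}$ shows that $\partial_1 + x_1^{p-1}\partial_2 + \cdots + x_1^{p-1}\cdots x_{r-1}^{p-1}\partial_r$ has a single Jordan block of size $p^r$; hence the same holds on each $\OO_n^\lambda$, yielding (iv), and (ii) follows from the preliminary observation. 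For (i), an analogous computation on the $\OO_r$-module decomposition $\L^0 = \L_D^0 = \bigoplus_{k \leq r}\OO_r\,\partial_k \oplus \bigoplus_{j > r}\OO_r\,z_j\partial_j$ shows that $\ad\sigma(D_n)$ has exactly $n$ Jordan blocks of size $p^r$, so by Theorem~\ref{t1} the element $D$ is regular.

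The implication (iv) $\Rightarrow$ (ii) is immediate from the preliminary observation; for (ii) $\Rightarrow$ (iii), I would first put $D_s$ in standard form via [St1, Theorem~7.5.1], then decompose $D_n \in \L^0$ uniquely as $\tilde D_n + T$ with $\tilde D_n \in \Der(\OO_r)$ and $T = \sum_{j>r}b_j z_j\partial_j$. The nilpotency of $D_n$, combined with the identity $(\sum b_j z_j\partial_j)^{[p]^k} = \sum b_j^{p^k}z_j\partial_j$ in the restricted quotient $\L^0/\Der(\OO_r)$, forces each $b_j$ into the maximal ideal of $\OO_r$. One then constructs an element $\sigma' \in R_u(G) \cap C_G(D_s)$ acting by $z_j \mapsto h_j z_j$ with units $h_j \equiv 1$ modulo the maximal ideal and satisfying the integrating-factor equation $\tilde D_n(h_j) = -b_j h_j$, so that $\sigma'(D_n) = \tilde D_n$. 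This reduces the problem to the case where $\tilde D_n$ is a regular nilpotent derivation of $\OO_r$, for which the normal form in (iii) is supplied by [P91, Theorem~2].

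The main obstacle is solving the integrating-factor equations $\tilde D_n(h_j) = -b_j h_j$ with invertible $h_j$. Regularity of $\tilde D_n$ on $\OO_r$ (a consequence of (ii) via the preliminary observation) reduces this to showing that the nilpotent perturbation $\tilde D_n + M_{b_j}$, where $M_{b_j}$ denotes multiplication by $b_j$, has a one-dimensional kernel containing an invertible element; the identity $h_j^{-1}\tilde D_n(h_j) = -b_j$ then yields $M_{h_j}^{-1}(\tilde D_n + M_{b_j})M_{h_j} = \tilde D_n$, confirming that conjugation by $\sigma'$ does eliminate $T$. Establishing this perturbation statement, either by Hensel-type iteration constructing $h_j$ order by order in the filtration of $\OO_r$ by powers of its maximal ideal, or by a direct nilpotent-perturbation argument showing that a regular nilpotent derivation remains conjugate to itself after adding a non-unit multiplication operator, is the technical heart of the argument; the remaining manipulations in the chosen coordinates are routine.
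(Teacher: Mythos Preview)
Your proposal diverges from the paper in the two hard directions, and each divergence contains a real gap.

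For $(\mathrm{i})\Rightarrow(\mathrm{iv})$: the Jordan-block statement extracted from the proof of Theorem~\ref{t1} concerns $\ad D$ acting on $\L_D^0$, not $D_n$ acting on $\OO_n$. These are different modules, and passing from the block structure of one to the other is not automatic. Deducing anything about $D_n\vert_{\OO_n}$ from regularity is exactly the content of the paper's part~(a), a substantial case analysis (splitting on whether $D\in\L_{(0)}$ and on the value of $r$) built on the observation that a nonzero $f\in\m\cap\ker D$ produces a nilpotent element $fD\in\mathfrak{c}_\L(D)$ which Remark~\ref{r1} then pins down. You have not replaced that argument with anything.

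For $(\mathrm{ii})\Rightarrow(\mathrm{iii})$: your decomposition $D_n=\tilde D_n+T$ with $\tilde D_n\in\Der(\OO_r)$ already presupposes that the zero-weight subalgebra $\OO_n^0$ for $\t_D$ is the coordinate subalgebra generated by $x_1,\dots,x_r$. But Strade's normal form only conjugates $\t_D$ into a sum of some $(1+x_i)\partial_i$'s together with an \emph{arbitrary} subtorus of the diagonal torus, and for such a subtorus $\OO_n^0$ need not even be a truncated polynomial ring. For instance, the torus $\k(x_1\partial_1+x_2\partial_2)$ in $W_2$ has zero-weight algebra spanned by $1$ and the monomials $x_1^ix_2^{p-i}$ ($1\le i\le p-1$); any product of two of the latter vanishes, so $\m_B^2=0$ and $B\not\cong\OO_1$. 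The paper handles this by using (ii) to show $B=\OO_n^0$ is differentiably simple (a nonzero $D_n$-stable ideal would meet $\ker D_n\vert_B=\k1$), then invoking Block's theorem to get $B\cong\OO_r$ abstractly, and only afterwards building the automorphism $\sigma$ from generators of $B$ together with carefully chosen weight vectors. Your integrating-factor step could plausibly substitute for the paper's extension argument once $B\cong\OO_r$ is known, but as written it rests on a coordinate picture that is itself part of what must be proved. (A small related slip: $\Der(\OO_r)$ is not an ideal of $\L^0$; the abelian ideal is $\bigoplus_{j>r}\OO_r\,z_j\partial_j$, with quotient $\Der(\OO_r)$, so your ``restricted quotient $\L^0/\Der(\OO_r)$'' does not make sense as stated.)
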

\begin{proof}
(a) Suppose $D\in\L_{\rm reg}$. We wish to prove that
$\ker D=\k 1$. So suppose the contrary. Then $\ker D$
has dimension $\ge 2$ ad hence the subspace $ \m\cap\ker D$ contains a nonzero element, $f$ say.
It is straightforward to see that if $f\in\m\setminus \m^2$ then $f^2\ne 0$ (here we use our assumption on $p$). Therefore, no generality will be lost by assuming that $f\in\m^2$. Since $fD\in\mathfrak{c}_{\L}(D)$
and $(fD)^p=f^pD^p=0$, it follows from  Remark~\ref{r1} that
$fD=\lambda D_n^{p^{r-1}}$ for some $\lambda\in\k$.

Suppose $D\not\in\L_{(0)}$. The $fD\ne 0$ and after rescaling $f$ (if need be) we may assume that
$fD=D_n^{p^{r-1}}$. This yields $D_n^{p^{r-1}}\in \L_{(1)}$.
As the restriction of $\ad D$ to $\L_D^0$ coincides with that of $\ad D_n$, it follows from Remark~\ref{r1} that $(\ad D_n)^{p^r-1}(y)=D$ for some $y\in\L_D^0$. Since $p-1\ge 2$, this yields
\begin{equation}\label{eqq}
D=\big(\ad D_n^{p^{r-1}}\big)^{p-1}\big((\ad D_n)^{p^{r-1}-1}(y)\big)\in [\L_{(1)},[\L_{(1)},\L]]\subseteq \L_{(1)}.
\end{equation}
This contradiction shows that the present case cannot occur.

Suppose $D\in\L_{(0)}$. Quite surprisingly, this case is more complicated, but the good news is that due to [\cite{St1}, Theorem~7.5.1] we now may assume that $\t_D\subseteq \t_n$.
Recall that $\{x_1\partial_1,\ldots,x_n\partial_n\}$
is a basis of $\t_n$ contained in $\t_n^{\rm tor}$.
Let
$\{\varepsilon_1,\ldots, \varepsilon_n\}$ be the corresponding dual basis in $\t_n^*$, so that $\varepsilon_i\big((x_j\partial_j)\big)=\delta_{ij}$ for $1\le i,j\le n$, and denote by $\nu_i$ the restriction of $\varepsilon_i$ to $\t_D$.
Let $\ad_{\!-1}$
denote the representation of $\L_0$ in $\mathfrak{gl}(\L_{-1})$ induced by the adjoint action of $\L_0$ to $\L_{-1}$. Then $\Lambda:=\{-\nu_1,\ldots, -\nu_n\}$ coincides with
the set of weights of $\t_D$ on $\L_{-1}$. Since $\dim\t_D=n-r$ and $\Lambda$ spans the dual space $\t_D^*$, it must be that ${\rm Card}(\Lambda)\ge n-r$.

For $\nu\in\Lambda$ we set $m(\nu):=\dim \L_{-1}^{\nu}$. Then $m(\nu)\ge 1$ and $\sum_{\nu\in\Lambda}\,m(\nu)=n$. Put $m:=\max_{\nu\in\Lambda}\,m(\nu)$ and pick $\nu'\in\Lambda$ such that $m(\nu')=m$.
Then
\begin{equation}\label{eqn}
m=n-\textstyle{\sum}_{\nu\in\Lambda\setminus
\{\nu'\}}\,m(\nu)\le n-{\rm Card}\big(\Lambda\setminus
\{\nu'\}\big)\le n-(n-r-1)=r+1.
\end{equation}
Write $D_n=\sum_{i\ge 0}\,D_{n,i}$ where $D_{n,i}\in \L_i$. Since $\t_D\subseteq \t_n$ we have that
$D_{n,i}\in\mathfrak{c}_\L(\t_D)$ for all $i$.
In particular, this means that $\ad_{\!-1}(D_{n,0})$
preserves each weight space $\L_{-1}^\nu$. Since $D_n$ is nilpotent, this yields $\big(\ad_{\!-1}(D_{n,0})\big)^m=0$.
As $\L_{(1)}$ is a restricted ideal of $\L_{(0)}$, it follows from Jacobson's formula that $D_n^{p^k}-D_{n,0}^{p^k}\in \L_{(1)}$ for all $k\in\Z_{\ge 0}$.

Suppose $r\ge 2$. Since $p\ge 3$, easy induction or $r$ shows that $p^{r-1}\ge r+1$. In view of (\ref{eqn}) this implies that $\ad_{\!-1}\big(D_{n,0}^{p^{r-1}}\big)=0$. But  $
D_{n,0}^{p^{r-1}}=0$ because $\ad_{\!-1}$ is a faithful representation of $\L_0$. Due to our earlier remarks, this forces $D_n^{p^{r-1}}\in \L_{(1)}$. So we can again apply (\ref{eqq}) to conclude that $D\in\L_{(1)}$. In particular, $D$ is nilpotent. As $D\in\L_{\rm reg}$, this contradicts [\cite{P91}, Theorem~2].

Now suppose $r=1$. Then ${\rm Card}(\Lambda)\in\{n-1,n\}$.
If ${\rm Card}(\Lambda)=n$ then the equality $\sum_{\nu\in \Lambda} m(\nu)=n$ forces $m=1$. So $\ad_{\!-1}(D_{n,0})=0$ which again yields $D_n^{p^{r-1}}=D_n\in\L_{(1)}$. So we can reach a contradiction by arguing as before.

If ${\rm Card}(\Lambda)=n-1$ then the above reasoning shows that $m=2$. Since $\dim \t_D=n-1$, we may assume without loss that $\t_D$ is spanned by elements
$$t_i=x_i\partial_i+c_i(x_n\partial_n)\ \qquad \quad(1\le i\le
n-1)$$ for some $c_i\in\mathbb{F}_p$ (see the proof of Lemma~\ref{l1} for more detail). In this case $\nu_1,\ldots, \nu_{n-1}$ are linearly independent and $\nu_n=c_1\nu_1+\cdots+c_{n-1}\nu_{n-1}$. As ${\rm Card}(\Lambda)=n-1$, it must be that $\nu_n=\nu_k$ for some $k\le n-1$, so that
$c_k=1$ and $c_i=0$ for $i\ne k$. Since $D_{n,0}\in\mathfrak{c}_{\L_0}(\t_D)$, replacing $D$
by $\sigma(D)$ for a suitable $\sigma\in G_0\cong {\rm GL}_n(\k)$ we may assume further that $k=n-1$ and $D_{n,0}=\lambda(x_{n-1}\partial_n)$ where
$\lambda\in\k$. Also, $$D_s=\textstyle{\sum}_{i=1}^{n-2}\,\alpha_i(x_i\partial_i)+\alpha_{n-1}(x_{n-1}\partial_{n-1}+x_n\partial_n)$$
for some $\alpha_i\in\k^\times$. As a consequence, $\mathfrak{c}_{\L}(\t_D)\subset \L_{(0)}$.

If $\lambda=0$ then $D_n\in\L_{(1)}$ and we can apply (\ref{eqq}) with $r=1$ to conclude that $D\in\L_{(1)}$. As $D\in\L_{\rm reg}$, this contradicts [\cite{P91}, Theorem~2].
If $\lambda\ne 0$ then
$D_n\not\in\L_{(1)}$.
Note that $\L_D^0\cap \L_0$ is spanned by $x_i\partial_i$ with $1\le i\le n-2$ and $x_i\partial_j$ with $i,j\in\{n-1,n\}$. Since $\dim \L_D^0 = np$ by Corollary~\ref{c1} and $\L_D^0=\mathfrak{c}_\L(\t_D)\subset \L_{(0)}$ by our earlier remarks, $\L_D^0\cap\L_{(1)}$ is a nonzero ideal of $\L_D^0$.
Since $D_n$ is a nilpotent element of $\L_D^0$, we then  have
$\mathfrak{c}_{\L}(D)\cap \L_{(1)}\ne \{0\}$. On the other hand,
Remark~\ref{r1} shows that any nilpotent element of $\mathfrak{c}_{\L}(D)$ is a scalar multiple of
$D_n$ (it is important here that $r=1$). Since $D_n\not\in\L_{(1)}$, we reach a contradiction thereby showing that the present case cannot occur.

Finally, suppose $r=0$. Then $D_n=0$ and $\t_D=\t_n$ which entails that $\ker D$ coincides with $\k 1$, the zero weight space of $\t_n$ in $\OO_n$. We thus conclude that (i) implies (ii).

\smallskip

\noindent (b) Next suppose $\ker D=\k 1$ and let
$B=\{f\in \OO_n\,|\,\,t(f)=0\mbox{ for all }\, t\in\t_D\}$, the zero weight space of $\t_D$ in $\OO_n$. Let $\m_B=B\cap\m$, the maximal ideal of the local ring $B$. The restriction of $D$ to $B$ is a nilpotent derivation of $B$. Since the ideal $\m_B$ is not $D$-stable by our assumption on $D$, the algebra $B$ is differentiably simple. Since $\dim B=p^r$ by Lemma~\ref{l1}, Block's theorem yields that
$B\cong\OO_r$ as $\k$-algebras; see [\cite{Bl69}, Theorem~4.1]. Since $D_{\vert B}\in\Der(B)$ is nilpotent and $\ker D_{\vert B}=\k 1$, it follows from [\cite{P91}, Theorem~2] that there exist $y_1,\ldots,
y_r\in \m_B$ whose cosets in $\m_B/\m_B^2$ are linearly independent such that
$$D_{\vert B}=\frac{\partial}{\partial y_1}+y_1^{p-1}\frac{\partial}{\partial y_2}+\cdots+
y_1^{p-1}\cdots y_{r-1}^{p-1}\frac{\partial}{\partial y_r}.$$
We claim that the partial derivatives $\partial_i/\partial y_i\in\Der(B)$  can be extended to derivations of $\OO_n$. Indeed, [\cite{P91}, Lemma~3] implies that $\Der(B)$ is a free $B$-module with basis $D_{\vert B},\ldots, D_{\vert B}^{p^{r-1}}$. As consequence, there exists a subset $\{b_{ij}\,|\,\,0\le i,j\le r-1\}\subset B$ such that $$\partial_i/\partial y_i=\big(\textstyle{\sum}_{j=0}^{r-1}b_{ij}D_n^{p^j}\big)_{\vert B}\,\qquad\quad(1\le i\le r).$$ The claim follows.
Since each derivation $\partial_i/\partial y_i$ of $\OO_n$ maps $B\cap \m^2$ to $\m_B=\m\cap B$, we now deduce that $\m_B^2= B\cap\m^2$.
As a consequence, $\m_B/\m_B^2$ embeds into $\m/\m^2$.

It is immediate from the above discussion that
there are $y'_{r+1},\ldots, y'_n\in \m$ such that the cosets of $y_1,\ldots, y_r, y'_{r+1},\ldots, y'_n$ in $\m/\m^2$ are linearly independent. Since $\t_D$ acts semisimply on $\OO_n/\k 1$ and $\dim \t_D=n-r$, we may assume further that there exist $\gamma_{r+1},\ldots, \gamma_n\in (\t_D^{\rm tor})^*$ such that $y'_i+\k 1\in (\OO_n/\k1)^{\gamma_i}$. Since $\dim \t_D=n-r$ and $B=\OO_n^0$, the weights $\gamma_{r+1},\ldots,\gamma_n$ must form a basis of the dual space $\t_D^*$. 

By construction, $t(y'_i)=\gamma_i(t)y'_i+\gamma'_i(t)1$ for all $t\in\t_D$, where $\gamma_i'$ is a linear function on $\t_D$. If $\gamma_i'$ is not proportional to $\gamma_i$ then there exists $t_i\in\t_D$ with $\gamma_i(t_i)=0$ and $\gamma_0'(t_i)=1$. But then $t_i(y_i')=1$ and $t_i^p(y_i')=0$ contradicting the inclusion $t_i\in\langle t_i,[p]\rangle$.
Hence for each $i\ge r+1$ there is $\epsilon_i\in\k$ such that  $t(y_i'+\epsilon_i)=\gamma_i(t)(y_i'+\epsilon_i)$.
Rescaling the $y'_i$'s if need be we may assume that $\epsilon_i\in\{0,1\}$.
For $r+1\le i\le n$ we now set $y_i:=y'_i+\epsilon_i$.

By our choice of $y_1,\ldots, y_n$ there is a unique automorphism $\sigma$ of $\OO_n$ such that
$\sigma^{-1}(x_i)=y_i$ for $1\le i\le r$ and $\sigma^{-1}(x_i)=y_i-\epsilon_i$ for $r+1\le i\le n$. In view of our earlier remarks we  have that
$\sigma(D_n)=\partial_1+
x_1^{p-1}\partial_2+\cdots+x_1^{p-1}\cdots x_{r-1}^{p-1}\partial_r$ and
$\sigma(D_s)=
\sum_{i=r+1}^{n}\lambda_i (\epsilon_i+x_i)\partial_i$ for some $\lambda_i\in\k$. Since $\dim \t_D=n-r$ and all elements $(\epsilon_i+x_i)\partial_i$ are toral,
it is straightforward to see that $\sigma(\t_D)$ is spanned by $(\epsilon_{r+1}+x_{r+1})\partial_{r+1},\ldots,
(\epsilon_n+x_n)\partial_n$. This shows that
(ii) implies (iii).

\smallskip

\noindent
(c) Suppose (iii) holds for $D$ and adopt the notation introduced in part~(b). As before, we identify the elements  $a_i\in \{0,1,\ldots, p-1\}$ with their images in $\mathbb{F}_p$. Let $\gamma=\sum_{i=r+1}^n\, a_i\gamma_i$ be an arbitrary element of $(\t_D^{\rm tor})^*$, so that $a_i\in\mathbb{F}_p$. It  follows from Lemma~\ref{l1} that
the weight space $\OO_n^\gamma$ is a free $B$-module of rank $1$ generated by $y^\gamma:=y_{r+1}^{a_{r+1}}\cdots y_n^{a_n}$. It follows from [\cite{P91}, Lemma~7(iv)] that $(D_{\vert B})^{p^r-1}\ne 0$. Since $D_n(y^\gamma)=0$, we now see that $D_n$ acts on each $\OO_n^\gamma$ as a Jordan block of size $p^r$. This shows that (iii) implies (iv).

Finally, if (iv) holds for $D$ then Lemma~\ref{l1} yields that $D_n$ acts on the zero weight space $\OO_n^0$ for $\t_D$ as a single Jordan block of size $p^r$. Since $\ker D\subseteq \OO_n^0$, this forces $\ker D=\k 1$. So (iv) implies (ii). Since we have already established that (i) and (ii) are equivalent, our proof of Theorem~\ref{t2} is complete.
\end{proof}
\begin{rem}\label{r2}
The proof of Theorem~\ref{t2} also shows that $D\in\L_{\rm reg}$ if and only if $\OO_n^\lambda$ is a free $\OO_n^0$-module of rank $1$ and $\dim (\OO_n^\lambda\cap \ker D_n)=1$ for any $\t_D$-weight $\lambda\in\Lambda(\OO_n)$. On the other hand, we know from linear algebra that $x\in\mathfrak{gl}(\OO_n)_{\rm reg}$ if and only if the minimal polynomial of $x$ coincides with the characteristic polynomial of $x$.
From this it follows that
$$\L_{\rm reg}\,=\,\L\cap\mathfrak{gl}(\OO_n)_{\rm reg}.$$
\end{rem}
\section{\bf Dickson invariants and the fibres of $\psi$}
\subsection{}\label{4.1}
It follows from [\cite{P91}, Theorem~3] that the morphism $\psi$ which sends any $x\in\L$ to $\big( \psi_0(x),\ldots,
\psi_{n-1}(x)\big)\in\mathbb{A}^n$ is flat and surjective. Moreover, for any $\mathbf{\eta}=(\eta_0,\ldots,\eta_{n-1})\in \mathbb{A}^n$ the fibre $P_\eta=\psi^{-1}(\mathbf{\eta})$ is an irreducible complete intersection whose defining ideal in $\k[\L]$ is generated by $\psi_0-\eta_0,\ldots,\psi_{n-1}-\eta_{n-1}$. Similar to the classical case (investigated by Kostant in [\cite{K63}]) each fibre $P_\eta$ contains a unique open $G$-orbit, $P_\eta^\circ$, but unlike [\cite{K63}] the complement $P_\eta\setminus P_\eta^\circ$ has codimension $1$ in $P_\eta$ for any $\eta\in\mathbb{A}^n$. To be more precise, there is a semiinvariant $\Delta\in \k[\L]$ for $G$ corresponding to a nontrivial character $\chi\colon\,G\to \k^\times$ such that $P_\eta\setminus
P_\eta^\circ$ coincides with the zero locus of $\Delta$ in $P_\eta$; see [\cite{P91}, Lemmas~12 and ~15] for detail.

In [\cite{P91}], the semiinvariant $\Delta$ is constructed as follows:
if $x^{\bf p-1}:=x_1^{p-1}\cdots x_n^{p-1}$, the monomial of top degree in $\OO_n$,
then $$\Delta(D):=\big(D^{p^n-1}(x^{\bf p-1})\big)(0)\qquad\quad (\forall\,D\in\L)$$
where the notation $f(0)$ for $f\in\OO_n$ is explained in Subsection~\ref{ss1}. It is straightforward to see that $\Delta(g(D))=\chi(g)\Delta(D)$ for any $g\in G$, where $\chi=\chi_0^{p-1}$ and $\chi_0$ is the  rational character of $G$ which takes value $\lambda^{-n}$ on the
automorphism  $x_i\mapsto \lambda x_i$, $1\le i\le n$, of $\OO_n$. Our goal in this subsection is to express $\Delta$ and the $\psi_i$'s in a more traditional way inspired
by  the classical work of Dickson [\cite{Di11}].

The $n$th wedge product $\wedge^n\L$ carries a natural $G$-module structure and the action of the torus $\lambda(\k^\times)\subset G$ introduced in Subsection~\ref{ss1} turns it into a graded vector space:
$$\wedge^n\L=\textstyle{\bigoplus}_{i\ge -n}(\wedge^n\L)_i, \qquad\ \L_{-n}=\wedge^n(\L_{-1})=\k(\partial_1\wedge\cdots\partial_n).$$
Since the $p$-mapping $\pi\colon\L\to \L,\,D\mapsto D^p,$ is a morphism given by a collection of homogeneous polynomial functions of degree $p$ of $\L$, there is a homogeneous polynomial function $\Delta_0\in\k[\L]$ of degree $1+p+\cdots+p^{n-1}=(p^n-1)/(p-1)$ such that $$D\wedge D^p\cdots\wedge D^{p^{n-1}}\in\,\,\Delta_0(D)(\partial_1\wedge\cdots\wedge\partial_n)+\textstyle{\bigoplus}_{i>-n}(\wedge^n\L)_i\qquad\ (\forall\,D\in\L).
$$
Since the standard maximal subalgebra $\L_{(0)}$ is $G$-stable, so is  $\textstyle{\bigoplus}_{i>-n}(\wedge^n\L)_i$, a subspace of codimension $1$ in $\wedge^n\L$.
Since ${\rm span}\{x_1,\ldots,x_n\}\cong \L_{-1}^*$ as $G_0$-modules, it is now  routine to check that $$\Delta_0(g(D))=
\chi_0(g)\Delta_0(D)\quad\  \mbox{for all }
g\in G \mbox{ and } D\in\L.$$
By the same reasoning, for every $1\le i\le n$ there exists a homogeneous polynomial function
$\Delta_i\in\k[\L]$ such that
$$D\wedge\cdots \wedge D^{p^n}\wedge D^{p^{i}}\wedge\cdots\wedge D^{p^{n-1}}\in\,\,\Delta_i(D)(\partial_1\wedge\cdots\wedge\partial_n)+\textstyle{\bigoplus}_{i>-n}(\wedge^n\L)_i
$$
for all $D\in \L$. Obviously, $\deg \Delta_i=(p^n-p^{i-1})+\deg \Delta_0$.
\begin{prop}\label{di}
We have that $\Delta_0^{p-1}=(-1)^n\Delta$ and $\psi_{i-1}=-\frac{\Delta_i}{\Delta_0}$ for $1\le i\le n$.
\end{prop}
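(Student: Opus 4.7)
I would first dispose of the second identity by substituting the Cayley--Hamilton-type relation (\ref{ppol}), $D^{p^n} = -\sum_{j=0}^{n-1} \psi_j(D) D^{p^j}$, into the defining wedge for $\Delta_i(D)$. By multilinearity, the wedge becomes a sum over $j$ of terms in which the $D^{p^{i-1}}$-slot is replaced by $D^{p^j}$, with coefficient $-\psi_j(D)$; since the wedge already contains $D^{p^j}$ elsewhere for $j \ne i-1$, the alternating property kills every such term except $j = i-1$, leaving $-\psi_{i-1}(D) \cdot (D\wedge D^p\wedge\cdots\wedge D^{p^{n-1}})$. Extracting the $\partial_1\wedge\cdots\wedge\partial_n$-coefficient gives $\Delta_i(D) = -\psi_{i-1}(D)\Delta_0(D)$, as required.

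For the first identity, both $\Delta_0^{p-1}$ and $\Delta$ are homogeneous $G$-semiinvariants in $\k[\L]$ of degree $p^n - 1$ with character $\chi_0^{p-1}$, so $f := \Delta_0^{p-1} - (-1)^n \Delta$ is also such a semiinvariant. Since $G\cdot\t_0$ contains a Zariski-open subset of $\L$ (cf.\ Subsection~1.2), vanishing of $f$ on $\t_0$ propagates to all of $G\cdot\t_0$ by semiinvariance ($f(gD)=\chi_0(g)^{p-1}f(D)=0$), and hence to all of $\L$ by density. It thus suffices to verify the identity on $\t_0$.

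For $D = \sum_i\lambda_i(1+x_i)\partial_i\in\t_0$, the generators $(1+x_i)\partial_i$ pairwise commute and are toral, so Jacobson's formula gives $D^{p^k} = \sum_i\lambda_i^{p^k}(1+x_i)\partial_i$. Extracting $\L_{-1}$-parts yields $\Delta_0(D) = M(\lambda) := \det(\lambda_i^{p^{j-1}})$, the Moore determinant. On the other hand, $D$ acts semisimply on $\OO_n$ with eigenbasis $(1+x)^b := \prod_i(1+x_i)^{b_i}$, $b\in\mathbb{F}_p^n$, and eigenvalues $\mu_b := \sum_i\lambda_i b_i$. Since $\binom{p-1}{k}\equiv(-1)^k\pmod{p}$ implies $x_i^{p-1} = \sum_{c=0}^{p-1}(1+x_i)^c$, one has $x^{\mathbf{p-1}} = \sum_b(1+x)^b$, and applying $D^{p^n - 1}$ and evaluating at $0$ gives $\Delta(D) = \sum_{b \in \mathbb{F}_p^n}\mu_b^{p^n - 1}$.

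To finish, one needs the polynomial identity $\sum_b\mu_b^{p^n - 1} = (-1)^n M(\lambda)^{p-1}$ in $\mathbb{F}_p[\lambda_1,\ldots,\lambda_n]$. Both sides are $\mathrm{GL}_n(\mathbb{F}_p)$-invariants of degree $p^n - 1$: the sum because $\mathrm{GL}_n(\mathbb{F}_p)$ permutes the $\mu_b$'s; $M^{p-1}$ because $M$ is a $\det$-semiinvariant and $\det^{p-1} \equiv 1$ on $\mathbb{F}_p^\times$. Dickson's classical theorem makes the space of such invariants one-dimensional, so the two sides are proportional. Comparing the coefficient of $\prod_i\lambda_i^{p^{i-1}(p-1)}$ pins down the constant: in $M^{p-1}$, a base-$p$ digit analysis via Lucas's theorem shows that only the all-identity tuple of permutations contributes, with coefficient $1$; in $\sum_b\mu_b^{p^n - 1}$, the multinomial coefficient is again $1$ by Lucas, while $\sum_b\prod_i b_i^{p^{i-1}(p-1)} = (p-1)^n\equiv(-1)^n\pmod{p}$, since only $b$'s with every $b_i\ne 0$ contribute (each factor being $b_i^{p-1}\equiv 1$). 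The main obstacle lies precisely in this coefficient comparison; an alternative route is to invoke the classical Dickson--Moore formula $\prod_{0\ne v\in V^*}v = (-1)^n M^{p-1}$ (obtained by expanding the additive polynomial $\prod_b(T - \mu_b)$ as an $(n+1)$-row Moore-type determinant) and then identify $\sum_b\mu_b^{p^n - 1}$ with $\prod_{b\ne 0}\mu_b$ by a further one-dimensionality argument.
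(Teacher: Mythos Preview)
Your argument for the second identity is the same as the paper's: substitute the relation~(\ref{ppol}) into the defining wedge and let the alternating property kill all but the $j=i-1$ term.

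For the first identity your approach is correct but genuinely different from the paper's. The paper does not restrict to $\t_0$; instead it uses the affine family
\[
\mathcal{D}_\lambda=(1+\lambda_1x_1^{p-1})\partial_1+x_1^{p-1}(1+\lambda_2x_2^{p-1})\partial_2+\cdots+x_1^{p-1}\cdots x_{n-1}^{p-1}(1+\lambda_nx_n^{p-1})\partial_n
\]
from \cite{P91}, whose $G$-saturation is the open set $\{\Delta\ne 0\}$. On this family a short direct computation gives $\Delta(\mathcal{D}_\lambda)=(-1)^n$ and $\Delta_0(\mathcal{D}_\lambda)=(-1)^{n(n-1)/2}$, so $\Delta_0^{p-1}(\mathcal{D}_\lambda)=1=(-1)^n\Delta(\mathcal{D}_\lambda)$; equality then propagates by the same semiinvariance-plus-density argument you used. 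Your route via $\t_0$ trades that external input for an explicit Moore-determinant identity and Dickson's one-dimensionality in degree $p^n-1$ (which is indeed correct: the only monomial in the Dickson generators of total degree $p^n-1$ is $\bar\psi_0$ itself, since all other generators have degree divisible by $p$). Your coefficient comparison is valid---the Lucas argument for the multinomial coefficient and the base-$p$ uniqueness forcing all $\sigma_k=\mathrm{id}$ in the expansion of $M^{p-1}$ both go through---though it is considerably more laborious than the paper's two-line evaluation on $\mathcal{Y}$. The payoff of your approach is that it makes the connection to the classical Dickson semiinvariant $\varphi_0$ completely transparent, which is exactly what the paper establishes \emph{after} the proposition as a separate observation.
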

\begin{proof}
Given $\mathbf{\lambda}=(\lambda_1,\ldots,\lambda_n)\in\mathbb{A}^n$
we set
\begin{equation*}
\mathcal{D}_\lambda:=(1+\lambda_1x_1^{p-1})\partial_1
+x_1^{p-1}(1+\lambda_2x_2^{p-1})\partial_2+\cdots+
x_1^{p-1}\cdots x_{n-1}^{p-1}(1+\lambda_nx_n^{p-1})\partial_n
\end{equation*}
and define $\mathcal{Y}:=\{\mathcal{D}_\lambda\,|\,\,
\mathbf{\lambda}\in\mathbb{A}^n\}$. It is proved in
[\cite{P91}, \S~3] that the $G$-saturation of $\mathcal{Y}$ coincides with the principal Zariski open subset $\{D\in\L\,|\,\,\Delta(D)\ne 0\}$ of $\L$.
It is also immediate from the discussion in {\it loc.\,cit.} that
$$\mathcal{D}_\lambda^{p^i}-(-1)^i\partial_i\,\in\,
\k\partial_1\oplus\cdots\oplus\k\partial_{i-1}\oplus
\m^{p-1}\cdot\L\qquad\quad(0\le i\le n-1)$$ and
$\mathcal{D}_\lambda^{p^n-1}
(x^{\bf p-1})-(-1)^n\in\m^{p-1}\cdot\L$ for all $\lambda\in\mathbb{A}^n$.
This implies that
$$\mathcal{D}_\lambda\wedge \mathcal{D}_\lambda^p\cdots\wedge \mathcal{D}_\lambda^{p^{n-1}}\in\,\,(-1)^{n(n-1)/2}(\partial_1\wedge\cdots\wedge\partial_n)+\textstyle{\bigoplus}_{i>-n}(\wedge^n\L)_i
$$ and
$\Delta(\mathcal{D}_\lambda)=(-1)^n$ for all $\mathbf{\lambda}\in\mathbb{A}^n$. As a consequence,
$\Delta_0^{p-1}(y)=(-1)^n\Delta(y)=1$ for all $y\in\mathcal{Y}$. Since both $\Delta_0^{p-1}$ and
$(-1)^n\Delta$ are semiinvariants for $G$ associated with
the same character $\chi$, this yields that
$\Delta_0^{p-1}$ and $(-1)^n\Delta$ agree on $G\cdot\mathcal{Y}$. Since the latter is Zariski open in $\L$ we now obtain that
$\Delta_0^{p-1}=(-1)^n\Delta$.

Since $D^{p^n}=-\sum_{i=1}^{n-1}\psi_i(D)D^{p^i}$,  the definition of $\Delta_i$ in conjunction with standard properties of wedge products yields that
$\Delta_i(D)=-\psi_i(D)\Delta_0(D)$ for all $D\in\L$.
This completes the proof.
\end{proof}
Let $\t_0$ denote the $\k$-span of all $(1+x_i)\partial_i$ with $1\le i\le n$, a maximal toral subalgebra of $\L$. It is well known that the
$G$-saturation of $\t_0$ is Zariski dense in
$\L$. By [\cite{P91}], the nomaliser $N_G(\t_0)$ acts faithfully on the $\mathbb{F}_p$-space $(\t_0^{\rm tor})^*$ and is isomorphic to $\mathrm{GL}_n(\mathbb{F}_p)$. Furthermore,
the natural restriction map $\k[\L]\to \k[\t_0]$
induces an isomorphism of invariant rings
$j\colon\,\k[\L]^G\stackrel{\sim}{\longrightarrow}\,
S(\t_0^*)^{\mathrm{GL}_n(\mathbb{F}_p)}$.

For the reader's convenience we recall the original definition of Dickson invariants. Given $(\xi_1,\ldots,\xi_n)\in\mathbb{A}^n$ put
$$M_0(\xi_1,\ldots,\xi_n):=\,
\left[\begin{array}{cccc}\xi_1&\xi_2&\cdots&\xi_n\\
\xi_1^p&\xi_2^p&\cdots&\xi_n^{p}\\
\vdots&\vdots&\cdots&\vdots\\
\xi_1^{p^{n-1}}&\xi_2^{p^{n-1}}&\cdots&\xi_n^{p^{n-1}}
\end{array}\right]
$$ and let $M_i(\xi_i,\ldots,\xi_n)$ be the $n\times n$ matrix obtained by replacing the $i$th row of $M_0(\xi_1,\ldots,\xi_n)$ by $\big(\xi_1^{p^n},\ldots,\xi_n^{p^n}\big)$. For
each $0\le i\le n$ define $\varphi_i\in \k[\t_0]$ by setting
$$\varphi_i(D)
=\det M_i(\xi_1,\ldots,\xi_n)
\qquad\quad \big(\forall\, D=\textstyle{\sum}_{i=1}^n\xi_i(1+x_i)
\partial_i\in\t_0\big).$$ Thanks to [\cite{Di11}, p.~76] we know that $\bar{\psi}_{i-1}:=-\frac{\varphi_i}{\varphi_0}\in S(\t_0^*)$ for $0\le i\le n-1$ and $$S(\t_0^*)^{\mathrm{GL}_n(\mathbb{F}_p)}=\,
\k[\bar{\psi}_0,\ldots,\bar{\psi}_{n-1}].$$

Since each $(1+x_i)\partial_i$ is a toral element of $\t_0$, we have that $$D^{p^k}=\big(\textstyle{\sum}_{i=1}^n\xi_i(1+x_i)
\partial_i\big)^{p^k}=\sum_{i=1}^n\xi_i^{p^k}(1+x_i)\partial_i,\qquad\ k\ge 0.$$ This shows that
$\Delta_i(D)=\det M_i(\xi_1,\ldots,\xi_n)$ for all
$i$. As a result, the restriction of $\Delta_0$ to $\t_0$ identifies with $\varphi_0$, the Dickson semiinvariant for $\mathrm{GL}_n(\mathbb{F}_p)$. In view of Proposition~\ref{di} this means that each $\bar{\psi}_i$ can be obtained by restricting $\psi_{i}$ to $\t_0$.
\begin{rem}
We mention for completeness that $\Delta_0$ is irreducible in $\k[\L]$. Indeed, if this is not the case then $n\ge 2$ and $\Delta_0=gf$ for some $G$-semiinvariants
$f,g\in\k[\L]$ of positive degree  (this is because the connected group $G$ must preserve the line spanned by each prime divisor
of $\Delta_0$). Let $\bar{f}$ denote the restriction of $f$ to $\t_0$, a nonzero homogeneous semiinvariant for $N_G(\t_0)\cong \mathrm{GL}_n(\mathbb{F}_p)$. As $p>2$, the derived subgroup of $N_G(\t_0)$ is isomorphic to
$\mathrm{SL}_n(\mathbb{F}_p)$. But then $\bar{f}\in \k[\t_0]^{\mathrm{SL}_n(\mathbb{F}_p)}$. Since $\deg\bar{f}=\deg f<(p^n-1)/(p-1)$, this contradicts [\cite{St87}, Theorem~C]. So $\Delta_0$ is irreducible in $\k[\L]$ as claimed.
\end{rem}
\begin{rem}
The varieties  $\{x\in P_{\mathbf{\eta}}\,|\,\,\Delta_0(x)=0\}$ may be reducible fore some $\mathbf{\eta}\in\mathbb{A}^n$. In fact, this happens already when $n=1$. In this case $\Delta_0$ is a linear function on $\L$ which vanishes on $\L_{(0)}$. Applying Theorem~\ref{t1} one observes that the fibre
$P_{-1}=\psi_0^{-1}(-1)$ is a smooth hypersurface in $\L$ consisting of all {\it nonzero} toral elements of $\L$. Then $P_{-1}^{\,\circ}$ is the $G$-orbit of $(1+x_1)\partial_1$
and the zero locus of $\Delta_0$ in $P_{-1}$ coincides with $P_{-1}\setminus P_{-1}^{\,\circ}=\,\mathbb{F}_p^\times (x_1\partial_1)+\L_{(1)}$ which has $p-1$ irreducible components. Each irreducible component is a Zariski closed $G$-orbit in $\L$.
\end{rem}
\begin{rem}
The Lie algebra $\L$ operates on $\L^*$ via the
coadjoint representation and this gives rise to the action of $\L$ on $\k[\L]\cong S(\L^*)$ as derivations.
Since all derivations annihilate $p$th powers, the subring $\k[\L]^{(p)}:=\{f^p\,|\,\,f\in \k[\L]\}$ lies in the invariant algebra $\k[\L]^\L$. Although this is not directly related to the present work, we mention that $\psi_0,\ldots,\psi_{n-1}\in\k[\L]^\L$ and $\k[\L]^\L=\k[\L]^{(p)}[\psi_0,\ldots,\psi_{n-1}].$ Moreover, $\k[\L]^\L$ is a free module of rank $p^n$ over $\k[\L]^{(p)}$. This is proved by Skryabin; see [\cite{Sk02}, Corollary~5.6].
\end{rem}
\subsection{}\label{4.2} Let ${\rm Sing}(P_{\mathbf{\eta}})$ denote the subvariety of all singular points of $P_{\mathbf{\eta}}$. By [\cite{P91}, Theorem~2], the special fibre $P_{\bf 0}$ of $\psi$ coincides with the nilpotent cone $\NN(\L)$ and ${\rm Sing}(P_{\bf 0})=P_{\bf 0}\setminus P^{\,\circ}_{\bf 0}$ coincides with the zero locus of $\Delta$ in $P_{\bf 0}$. As the latter has codimension $1$ in $P_{\bf 0}$, the variety $P_{\bf 0}$ is not normal. This is, of course, in sharp contrast with a well known result of Kostant [\cite{K63}] (valid for all reductive Lie algebras over fields of characteristic $0$). Our final result in this paper shows that fibre of $\psi$ is normal if and only if it is smooth.
\begin{theorem}\label{t3}
Let $\eta\in\mathbb{A}^n$. The the following hold:
\begin{itemize}
\item[(i)\,] The fibre $P_{\mathbf{\eta}}$ is smooth if and only if it consists of regular semisimple elements of $\L$. The latter happens if and only if  $\psi_0(D)\ne 0$ for all $D\in P_{\mathbf{\eta}}$.

\smallskip

\item[(ii)\,] The fibre $P_\mathbf{\eta}$ is normal if and only if it is smooth.
\end{itemize}
\end{theorem}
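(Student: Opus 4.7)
The plan is to dispose of (i) using Theorem~\ref{t1} and the Jordan decomposition, then reduce (ii) to a codimension-one statement about the singular locus.

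For (i): By Theorem~\ref{t1}, a point $D \in P_\eta$ is smooth iff it is regular, so $P_\eta$ is smooth iff every element of $P_\eta$ is regular. If $\eta_0 \ne 0$, then $r(D) = 0$ for every $D \in P_\eta$; Lemma~\ref{l2}(i) then forces $D_n = 0$ and $\dim \t_D = n$, making $D$ regular semisimple and a fortiori regular. Conversely, suppose $\eta_0 = 0$. For any $D \in P_\eta$, the semisimple part $D_s$ also lies in $P_\eta$ because $\psi_i(D) = \psi_i(D_s)$ (Subsection~\ref{3.2}), and Lemma~\ref{l2}(i) yields $r(D_s) \ge 1$. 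By Corollary~\ref{c1}, $\mathfrak{c}_\L(D_s) = \mathfrak{c}_\L(\t_{D_s})$ has dimension $np^{r(D_s)} > n$, so $D_s \notin \L_{\mathrm{reg}}$; by Theorem~\ref{t1} this means $D_s$ is a singular point of $P_\eta$, so $P_\eta$ is not smooth.

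For (ii), the forward implication smooth $\Rightarrow$ normal is standard. For the converse I argue by contrapositive: assume $P_\eta$ is not smooth, so $\eta_0 = 0$ by (i); I will show $P_\eta$ is not normal. Since $P_\eta$ is an irreducible complete intersection, it is Cohen--Macaulay and hence satisfies Serre's condition $S_2$. By Serre's normality criterion, normality is then equivalent to $R_1$, so it suffices to show that the singular locus of $P_\eta$ has codimension exactly $1$ in $P_\eta$. Theorem~\ref{t1} identifies the singular locus as $P_\eta \setminus \L_{\mathrm{reg}}$, so the task is to exhibit a codimension-one piece of $P_\eta \setminus \L_{\mathrm{reg}}$.

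To produce that piece, fix a purely semisimple $D_s \in P_\eta$ with $r := r(D_s) \ge 1$, so that $\dim \t_{D_s} = n - r$. For any nilpotent $N \in \mathfrak{c}_\L(D_s)$, the element $D_s + N$ has Jordan decomposition $(D_s, N)$ and thus lies in $P_\eta$, and by Theorem~\ref{t2} $D_s + N$ lies in $\L_{\mathrm{reg}}$ only when $N$ acts on each $\t_{D_s}$-weight space of $\OO_n$ as a single Jordan block of size $p^r$. In particular $D_s$ itself ($N=0$) is non-regular. Sweeping by the $G$-action yields a $G$-stable closed subset $Z \subseteq P_\eta \setminus \L_{\mathrm{reg}}$; the dimension of $Z$ is estimated via Corollary~\ref{c1} (which gives $\dim \mathfrak{c}_\L(D_s) = np^r$) together with Skryabin's analysis in [\cite{Sk14}] of the nilpotent cone and orbit structure in centralisers of semisimple elements of Cartan-type Lie algebras. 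The goal is $\dim Z = \dim P_\eta - 1$, which would complete the proof. The main obstacle is precisely this uniform dimension count across all strata $\eta_0 = 0$: it generalises the $\eta = 0$ case (handled already in [\cite{P91}, Theorem~2]) and is exactly where [\cite{Sk14}] is essential.
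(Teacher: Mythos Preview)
Your argument for part~(i) is correct and in fact slightly cleaner than the paper's: rather than reducing to $\t_n$ via [\cite{P91}, Theorem~3(iv)], you use directly that $\psi_i(D)=\psi_i(D_s)$ to place $D_s$ in $P_\eta$ and then invoke Corollary~\ref{c1} to see that $D_s$ has centraliser of dimension $np^{r(D_s)}>n$ when $\eta_0=0$. The framework you set up for part~(ii) --- Cohen--Macaulayness of a complete intersection plus Serre's criterion, reducing the question to $\mathrm{codim}\,\mathrm{Sing}(P_\eta)=1$ --- is also exactly right and matches the paper's strategy.

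The gap is the dimension count itself, which you flag but do not carry out. Your proposed construction (fix a single $D_s$ with $r(D_s)\ge 1$, sweep $D_s+\NN(\mathfrak{c}_\L(D_s))$ by $G$) is not how the paper proceeds, and making it work would require knowing the dimension of the non-regular locus inside $D_s+\NN(\mathfrak{c}_\L(D_s))$ together with the dimension of the $G$-orbit of $D_s$, uniformly in $\eta$; your appeal to ``Skryabin's analysis of the nilpotent cone and orbit structure in centralisers of semisimple elements'' is not something [\cite{Sk14}] actually provides in that form. The paper instead takes a different and much more explicit route: for $\eta_0=0$ it builds, following [\cite{Sk14}], a family $\mathcal{D}_{n-1}(\eta)\subset P_\eta$ of derivations $D$ whose maximal $D$-invariant ideal is $I_{n-1}=x_n\OO_n$, shows this family is an affine space of dimension $(p-1)(n-1)+(p^n-p^{n-1}-1)$, then computes that the $G$-saturation $X$ of $\mathcal{D}_{n-1}(\eta)$ has dimension exactly $np^n-n-1=\dim P_\eta-1$ via an explicit count using $G_{n-1}$ and $N_G(I_{n-1})$. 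A separate argument (using Theorem~\ref{t2} and the fact that regular elements have $I_D=\{0\}$ or $I_D$ of the wrong shape) shows $X\cap\L_{\rm reg}=\emptyset$. This is the missing piece, and it is where the real work of [\cite{Sk14}] enters --- not as a general structural statement about centralisers, but through the concrete parametrisation of elements with prescribed maximal invariant ideal.
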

\begin{proof}
(a) Let $Z=\{x\in\L\,|\,\,\psi_0(x)=0\}$. It follows from Theorem~\ref{t1} that $P_{\mathbf{\eta}}$ is a smooth variety if and only if $P_{\mathbf{\eta}}\subset \L_{\rm reg}$, whilst [\cite{P91}, Theorem~3(iv)] implies that the inclusion $P_{\mathbf{\eta}}\subset \L_{\rm reg}$ takes place if and only  $P_{\mathbf{\eta}}\cap \t_n\subset\L_{\rm reg}$. On the other hand, Theorem~\ref{t2} in conjunction with Lemma~\ref{l2} shows that
$P_{\mathbf{\eta}}\cap \t_n\subset\L_{\rm reg}$ if and only if
$\psi_0(D)\ne 0$ for all $D\in P_{\mathbf{\eta}}\cap\t_n$. Since the set $P_{\mathbf{\eta}}\cap Z$ is Zariski closed and $G$-stable, the latter occurs if and only if $P_{\mathbf{\eta}}\cap Z=\emptyset$. Statement (ii) follows.

\smallskip

\noindent
(b) If $\eta_0\ne 0$ then $P_\eta$ is smooth and hence normal; see [\cite{Sh94}, 
Ch.~2,  \S\,5, Theorem~1]. So suppose from now that $\eta_0=0$.
The we may also assume without loss that $n\ge 2$. Given $D\in \L$ we let $I_D$ stand for the unique maximal $D$-invariant ideal of $\OO_n$. It is immediate from Block's theorem [\cite{Bl69}] that  $\L/I_D \cong \OO_k$ for some $k\in\{0,1,\ldots, n\}$. By [\cite{Sk14}, Lemma~1.1(i)], each $I_D$ is $G$-conjugate to one of the ideals $I_k:=\langle x_{k+1},\ldots,x_n\rangle.$ 

Following [\cite{Sk14}] we let $\mathcal{D}_{n-1}$ denote the set of all
$D=\sum_{i=1}^nf_i\partial_i$ such that $f_n\in I_{n-1}=x_n\OO_n$
and $$f_i=x_1^{p-1}\cdots
x_{i-1}^{p-1}+x_1^{p-1}\cdots x_{n-1}^{p-1}\cdot g_i,\ \qquad 1\le i\le n-1,$$ where $g_i$ lies in the {\it subalgebra} of $\OO_n$ generated by $x_n$.
Note that $I_D=I_{n-1}$ for every $D\in\mathcal{D}_{n-1}$.

Given an endomorphism $x$ of a finite dimensional vector space $V$ over $\k$ we write $\chi_x(t)$ and $m_x(t)$ for the characteristic and minimal polynomial of $x$, respectively. Pick any $D=\sum_{i=1}^nf_i\partial_i\in\mathcal{D}_{n-1}$ with the $f_i$'s as above and let $\pi\colon\, \n_{\L}(I_{n-1})\to \Der(\OO_n/I_{n-1})$ be the canonical homomorphism. As explained in the proof of [\cite{Sk14}, Lemma~2.2], $\ker \pi=I_{n-1}\L$ and identifying $\OO_n/I_{n-1}$ with $\OO_{n-1}$ we get
$$\pi(D)=\,\textstyle{\sum}_{i=1}^{n-1}x_1^{p-1}\cdots x_{i-1}^{p-1}\partial_i+x_1^{p-1}\cdots 
x_{n-1}^{p-1}
\cdot\textstyle{\sum}_{i=1}^{n-1}\,g_{i}(0) \partial_i. $$

We now let $\mathcal{D}_{n-1}(\eta)$ be the subset of $\mathcal{D}_{n-1}$ consisting of all $D=\sum_{i=1}^nf_i\partial_i$ for which  $g_i(0)=(-1)^{n-i-1}\eta_i^{1/p}$ 
for $1\le i\le n-1$ and $D^{p^{n-1}}+\sum_{i=1}^{n-1}
\eta_i^{1/p}D^{p^{i-1}}\in\mathcal{N}(\L)$.
If $D\in\mathcal{D}_{n-1}(\eta)$ then [\cite{Sk14}, Lemma~1.4] shows that 
$$\chi_{\pi(D)}(t)=m_{\pi(D)}(t)=t^{p^{n-1}}-
\textstyle{\sum}_{i=1}^{n-1}(-1)^{n-i}g_i(0)\cdot t^{p^{i-1}}=t^{p^{n-1}}+
\textstyle{\sum}_{i=1}^{n-1}\,\eta_{i}^{1/p}t^{p^{i-1}}$$ forcing $\chi_{\pi(D)}(t)^p=t^{p^n}+\sum_{i=0}^{n-1}\eta_iD^{p^i}$ (here we use our assumption that $\eta_0=0$). Since $D\in\mathcal{D}_{n-1}(\eta)$ we also have that
$\chi_{\pi(D)}(D)\in\mathcal{N}(\L)$. This implies that $m_D(t)$ divides $\chi_{\pi(D)}(t)^{p^l}$ for some $l\in\Z_{\ge 0}$.
Since $m_D(t)$ and $\chi_D(t)$ have the same set of roots, this yields that any root of $\chi_D(t)$ is a root of $\chi_{\pi(D)}(t)$.
On the other hand, $\chi_D(\pi(D))=\pi(\chi_{D}(D))=0$ because $\pi$ is a homomorphism of restricted Lie algebras. As $\chi_{\pi(D)}(t)=m_{\pi(D)}(t)$, we deduce that $\chi_D(t)-\chi_{\pi(D)}(t)^p=\alpha
\chi_{\pi(D)}(t)$ for some $\alpha\in\k$, that is,
$$\chi_D(t)=\chi_{\pi(D)}(t)\cdot (\chi_D(t)^{p-1}+\alpha).$$
If $\alpha\ne 0$ then  $\chi_{\pi(D)}(\beta)\ne 0$ for some root $\beta$ of $\chi_{D}(t)$. Since this  possibility is ruled out by our preceding remark, it must be that $\chi_D(t)=\chi_{\pi(D)}(t)^p=t^{p^n}+\sum_{i=0}^{n-1}\eta_it^{p^i}.$ So $D\in P_\eta$, forcing $\mathcal{D}_{n-1}(\eta)\subset P_\eta$.

\smallskip

\noindent
(c) We claim that $\mathcal{D}_{n-1}(\eta)$ is a Zariski closed subset of $\L$ isomorphic to an affine space of dimension $(p-1)(n-1)+(p^n-p^{n-1}-1)$. In 
order see this we first define
$$D_\eta:=\partial_1+x_1^{p-1}\partial_2+\cdots +
x_1^{p-1}\cdots x_{n-2}^{p-1}\partial_{n-1}+
\textstyle{\sum}_{i=1}^{n-1}(-1)^{n-i-1}\eta_i^{1/p}x_1^{p-1}\cdots x_{n-1}^{p-1}
\partial_i,$$ 
put $\bar{\chi}(t):=t^{p^{n-1}}+\sum_{i=1}^{n-1}\eta_{i}^{1/p}
t^{p^{i-1}}$, and let
$D=\sum_{i=1}^nf_i\partial_i$ be an arbitrary element of $\mathcal{D}_{n-1}(\eta)$ and let the $g_i$'s with $1\le i\le n-1$ be as in part~(b).  Write $I_{n-1}'$ for the linear span of all monomials $x_1^{a_1}\cdots x_n^{a_n}$ with $0\le a_i\le p-1$ such that
$a_n\ge 1$ and $(a_1\ldots,a_n)\ne (p-1,\ldots, p-1,1)$.
Then   $f_n= c x_1^{p-1}\cdots x_{n-1}^{p-1}x_n+f_n'$ for some $c\in\k$ and some $f_n'\in I_{n-1}'$. Since $g_i(0)=(-1)^{n-i-1}\eta_i^{1/p}$ for $1\le i\le n-1$ we have that
$$D= D_\eta+D_1+x_1^{p-1}\cdots x_{n-1}^{p-1} D_2$$ where
$D_1=f_n'\partial_n$ and $D_2=c(x_n\partial_n)+
\textstyle{\sum}_{i=1}^{n-1}\,(g_i-g_i(0))\partial_i.$  It should be stressed here that $D_\eta$ is independent of the choice of $D\in \mathcal{D}_{n-1}(\eta)$ and $\bar{\chi}(D_\eta)=0$ by [\cite{Sk14}, Lemma~1.4]. Since $I_{n-1}\partial_n$ is a restricted Lie subalgebra of $\L$ normalised by $D_\eta$, Jacobson's formula entails that $\bar{\chi}(D_\eta+D_1)\in I_{n-1}\partial_n$.

Let $J$ denote the ideal of $\OO_n$ generated by $x_1,\ldots, x_{n-1}$. Using Jacobson's formula and induction on $k$ one observes that
$$
D^{p^k}\equiv (D_\eta+D_1)^{p^k}+\big(\ad(D_\eta+D_1)^{p^k-1}\big)
(x_1^{p-1}\cdots x_{n-1}^{p-1}D_2)\ \,\big({\rm mod}\, J^{(n-k-1)(p-1)+1}\L\big)
$$ for $0\le k\le n-1$ (see the proof of Lemma~1.2 in [\cite{Sk14}] for a similar argument). In view of [\cite{Sk14}, Lemma~1.3] this yields
$$
\bar{\chi}(D)\equiv\bar{\chi}(D_\eta+D_1)+
(-1)^{n-1}D_2\ \,\big({\rm mod}\,J\L\big).
$$
Since $\bar{\chi}(D)\in\ker\pi=I_{n-1}\L$ by part~(b) and both $\bar{\chi}(D_\eta+D_1)$ and $D_2$ are in $I_{n-1}\L$ by our earlier remarks, we have that
$$\bar{\chi}(D)-\bar{\chi}(D_\eta+D_1)-(-1)^{n-1}D_2\in J\L\cap I_{n-1}\L\subset I_{n-1}\m\L.$$ 
On the other hand, $I_{n-1}\L=\k(x_n\partial_n)\oplus R$ where $R:=\sum_{i=1}^{n-1}\k(x_{n}\partial_i)\oplus I_{n-1}\m\L$ is an ideal of codimension $1$ 
in the Lie algebra $I_{n-1}\L$. Since $I_{n-1}\m\L \subset\L_{(1)}$ and
 $\sum_{i=1}^{n-1}\k(x_{n}\partial_i)$ is an abelian subalgebra of $\L$ consisting of nilpotent elements, Jacobson's formula shows that $R\subset\mathcal{N}(\L)$ coincides with the nilradical of  $I_{n-1}\L$. 
 
Let $\phi\colon I_{n-1}\L\to 
\k(x_n\partial_n)\cong (I_{n-1}\L)/R$ be the canonical homomorphism and denote by $V$ the linear span of $I_{n-1}'$ and all $x_1^{p-1}\cdots x_{n-1}^{p-1}x_n^i\partial_i$ with
$1\le i\le p-1$ and $1\le j\le n-1$. 
Clearly, $$\mathcal{D}_{n-1}(\eta)\subset D_\eta+V+\k(x_1^{p-1}\cdots x_{n-1}^{p-1}x_n\partial_n)$$ and $\dim V=(p^n-p^{n-1}-1)+(n-1)(p-1)$.
As $\bar{\chi}(D)^p=0$ by part~(b), it must be that
$(\phi\circ\bar{\chi})(D)=0$.
As $D_2=c(x_n\partial_n)+\sum_{i=1}^{n-1}(g_i-g_i(0))\partial_i$ and 
$\bar{\chi}(D_\eta+D_1)$ both lie in $\k(x_n\partial_n)+R$, the above implies that $$(\phi\circ \bar{\chi})(D_\eta+D_1)
=(-1)^n c(x_n\partial_n)\qquad\quad \big(\forall\,D\in\mathcal{D}_{n-1}(\eta)\big).$$
Let $\widetilde{V}$ be the set of all elements of the form $D_{v,t}:=D_\eta+v+t(x_1^{p-1}\cdots x_{n-1}^{p-1}x_n\partial_n)$ with $v\in V$ and $t\in\k$. Then $\widetilde{V}\subseteq \mathcal{D}_{n-1}$ and part~(b) yields $\bar{\chi}(\widetilde{V})\subset\ker\pi$. Obviously, $\widetilde{V}$ is a closed subset of $\L$ isomorphic to $V\oplus\k$. 
Because we can repeat the above argument with $D=D_{v,t}\in\widetilde{V}$ 
instead of $D\in\mathcal{D}_{n-1}(\eta)$, we have that $$(\phi\circ \bar{\chi})(D_{v,t})=(-1)^nt(x_n\partial_n)\qquad\quad \ \big(\forall\,D_{v,t}\in\widetilde{V}\big).$$
This implies that there exists a regular function $F\in \k[V]$ with the property that $\bar{\chi}(D_{v,t})\in\mathcal{N}(\L)$ if and only if $t=F(v)$. 
From this it is immediate that the canonical projection $\widetilde{V}\twoheadrightarrow V$ maps $\mathcal{D}_{n-1}(\eta)\subset\widetilde{V}$ isomorphically onto $V$. The claim follows.

\smallskip

\noindent
(d) By part~(c), $\mathcal{D}_{n-1}(\eta)$ is an irreducible Zariski closed subset of $\L$ and
\begin{equation}\label{e1}
\dim\mathcal{D}_{n-1}(\eta)= (p-1)(n-1)+(p^n-p^{n-1}-1).
\end{equation}Let $G_{n-1}$ be the subgroup of $G$ consisting of all automorphisms $\sigma$ fixing $x_n$ and preserving the ideal of $\OO_n$ generated by $x_1,\ldots, x_{n-1}$. It is straightforward to see that
\begin{equation}\label{e2}
\dim G_{n-1}=(n-1)(p^n-p).
\end{equation}
By [\cite{Sk14}, Proposition~2.3], for any $D\in \L$ with
$I_D=I_{n-1}$ there exists a unique $\sigma\in G_{n-1}$ such that $\sigma(D)\in\mathcal{D}_{n-1}$.
Set $\widetilde{\mathcal{D}}_{n-1}(\eta):=G_{n-1}\cdot \mathcal{D}_{n-1}(\eta)$. Due to (\ref{e1}) and (\ref{e2}) we have 
\begin{equation}\label{e3}
\dim \widetilde{\mathcal{D}}_{n-1}(\eta)=\dim G_{n-1}+\dim\mathcal{D}_{n-1}(\eta)=np^n-p^{n-1}-n.
\end{equation}
Let $X$ denote the $G$-saturation of $\widetilde{\mathcal{D}}_{n-1}(\eta)$. The morphism $G\times _{N_G(I_{n-1})}\widetilde{\mathcal{D}}_{n-1}(\eta)\to X$ sending any 
$(g,D)\in G \times _{N_G(I_{n-1})}\widetilde{\mathcal{D}}_{n-1}(\eta)$ to $g(D)$ is obviously surjective. If $g(D)=g'(D')$ for some $g,g'\in G$ and some $D,D'\in \widetilde{\mathcal{D}}_{n-1}(\eta)$ then $g^{-1}g'(I_{n-1})=I_{g^{-1}g'(D')}=I_D=I_{n-1}$ forcing
$g'\in gN_G(I_{n-1})$. Therefore, the morphism is bijective, so that
$$\dim X=\dim G-\dim N_G(I_{n-1})+\dim\widetilde{\mathcal{D}}_{n-1}(\eta).$$
In view of (\ref{e3}) and the equality $\dim N_G(I_{n-1})=(n-1)(p^n-1)+p^n-p^{n-1}$ this yields 
\begin{eqnarray*}\dim X&=&n(p^n-1)-(n-1)(p^n-1)-(p^n-p^{n-1})+np^n-p^{n-1}-n\\
&=&
np^n-n-1.\end{eqnarray*}

\smallskip

\noindent
(e) We claim that $X\cap \L_{\rm reg}=\emptyset$. Indeed, suppose the contrary.
Since $X$ is $G$-stable, Theorem~\ref{t2} says that
$X$ contains a regular element of the form
$$D=
\partial_1+x_1^{p-1}\partial_2+
\cdots+x_1^{p-1}\cdots
x_{r-1}^{p-1}\partial_r+\textstyle{\sum}_{i=r+1}^n\,\mu_i(x_i+\epsilon_i)
\partial_i,\qquad\ 0\le r\le n,$$ where $\epsilon_i\in\{0,1\}$ and $\mu_i\in\k$, such that
$D_s=\textstyle{\sum}_{i=r+1}^n\,\mu_i(\epsilon_i+x_i)
\partial_i$ generates an $(n-r)$-dimensional torus in $\L$. If $\epsilon_i=1$ for all $i>r$ then one checks directly that $D^{p^n-1}(x^{\bf p-1})\not\in\m$ forcing $I_D=\{0\}$.
If $\epsilon_l=\epsilon_m=0$ for some $l\ne m$ then $I_D$ contains $x_l\OO_n+x_n\OO_n$, which is impossible because $I_D$ must be $G$-conjugate to $I_{n-1}$. Therefore, no generality will be lost by assuming that $I_D=I_{n-1}$, so that $\epsilon_n=0$ and $\epsilon_i=1$ for $r<i<n$. Our discussion in part~(d) then shows that $D\in\widetilde{\mathcal{D}}(\eta)$. Hence there exists a unique $g\in G_{n-1}$ such that $g(D)\in\mathcal{D}(\eta)$, so that
$g(\bar{\chi}(D))=\bar{\chi}(g(D))\in\ker\pi=I_{n-1}\L$ by part~(c). As $g^{-1}(I_{n-1})=I_{n-1}$, this yields 
$\bar{\chi}(D)\in I_{n-1}\L$. 

Let $D'=\partial_1+x_1^{p-1}\partial_2+
\cdots+x_1^{p-1}\cdots
x_{r-1}^{p-1}\partial_r+\textstyle{\sum}_{i=r+1}^{n-1}\,\mu_i(1+x_i)
\partial_i$. Then $D=D'+\mu_n(x_n\partial_n)$ and
$[x_n\partial_n, D']=0$. Since $(x_n\partial_n)^p=x_n\partial_n$, it follows that $\bar{\chi}(D)=\bar{\chi}(D')+\bar{\chi}(\mu_n)(x_n\partial_n)$ implying $\bar{\chi}(D')\in I_{n-1}\L$. Since $D'(x_i)\in\k[x_1,\ldots,x_{n-1}]$ for $1\le i\le n-1$ and $D'(x_n)=0$, this forces $\bar{\chi}(D')=0$. It follows that $\bar{\chi}(D)=\bar{\chi}(\mu_n)(x_n\partial_n)$. As
$x_n\partial_n$ is semisimple and
$\bar{\chi}(D)=g^{-1}(\bar{\chi}(g(D)))$ is nilpotent  (by the definition of $\mathcal{D}(\eta)$) we now deduce that
$\bar{\chi}(D)=0$. But then $m_D(t)$ has degree $<p^n$ contrary to Remark~2. This contradiction proves the claim.

\smallskip

\noindent
(f)
Part~(e)
together with Theorem~\ref{t1} gives $X\subseteq {\rm Sing}(P_\eta)$. As $\dim X=(\dim P_\eta)-1$ by part~(c) we conclude that ${\rm Sing}(P_\eta)$ has codimension $1$ in $P_\eta$. Then [\cite{Sh94}, 
Ch.~2,  \S\,5, Theorem~3] shows that the variety $P_\eta$ is not normal.
\end{proof}
\begin{rem}
(1) The equivalence of (i) and (ii) in Theorem~\ref{t2} was conjectured by Hao Chang (private communication). 

\smallskip

\noindent
(2) It would be interesting to describe the regular elements of $\L$ in the case where ${\rm char}(\k)=2$. Indeed, this
is a meeting point of the theory of restricted
Lie algebras and the theory of complex Lie superalgebras: if $p=2$ then $\OO_n$ is isomorphic to an exterior algebra and the Lie algebra $\L=\Der(\OO_n)$ can be obtained by reduction modulo $2$ from a suitable $\Z$-form of the finite dimensional complex Lie superalgebra of type $W_n$.

\smallskip

\noindent
(3) It would also be interesting to obtain a description of regular elements in the finite dimensional restricted Lie algebras of type $S_n$, $H_{2n}$ and $K_{2n+1}$ similar to that given in  Theorem~\ref{t2}.
\end{rem}

\end{document}